\definecolor{eb}{rgb}{0.2,0,0.8}
\theoremstyle{plain}
\newtheorem{theorem}{Theorem}[section]
\newtheorem{lemma}[theorem]{Lemma}
\newtheorem{proposition}[theorem]{Proposition}
\newtheorem{corollary}[theorem]{Corollary}
\theoremstyle{definition}
\newtheorem{definition}[theorem]{Definition}
\newtheorem{remark}[theorem]{Remark}
\numberwithin{equation}{section}
\DeclareMathOperator{\er}{\mathbb{E}}
\DeclareMathOperator{\p}{\mathbb{P}}
\DeclareMathOperator{\e}{\mathbb{E}}
\DeclareMathOperator{\ee}{\mathbf{e}}
\DeclareMathOperator{\nr}{\mathbb{N}}
\DeclareMathOperator{\re}{\mathbb{R}}
\renewcommand{\@secnumfont}{\bfseries}
\renewcommand\section{\@startsection{section}{1}%
\z@{.7\linespacing\@plus\linespacing}{.5\linespacing}%
{\large\bfseries\scshape\centering}}
\renewcommand\subsection{\@startsection{subsection}{2}%
  \z@{.5\linespacing\@plus.7\linespacing}{-.5em}%
  {\bfseries\scshape}}
\renewcommand\subsubsection{\@startsection{subsubsection}{3}%
  \z@{.5\linespacing\@plus.7\linespacing}{-.5em}%
  {\scshape}}
\author[J.~Arista]{Jonas Arista}
\address{Universit\"at Bielefeld\\
Fakult\"at f\"ur Mathematik \\
Universit\"atsstra{\ss}e 25\\
33615 Bielefeld, Germany}
\email{jarista@math.uni-bielefeld.de}
\author[V.~Rivero]{V\'ictor Rivero}
\address{Centro de Investigaci\'on en Matem\'aticas A.C. \\
Calle Jalisco s/n \\
Col. Mineral de Valenciana \\
C.P. 36240 Guanajuato, Mexico}
\email{rivero@cimat.mx}
\keywords{L\'evy processes; exponential functionals; factorisations in law;  integral equations; tail asymptotics.}
\subjclass[2010]{Primary:
60K35, 
82B23, 
60B20. 
Secondary:
60G10, 
22E30, 
62H10. 
}
\begin{document}

\title{Implicit renewal theory for exponential functionals \\ of L\'evy processes} 
\maketitle

\begin{abstract}
We establish a new integral equation for the probability density of the exponential functional of a L{\'e}vy process and provide a three-term (Wiener-Hopf type) factorisation of its law. We explain how these results complement the techniques used in the study of exponential functionals and, in some cases, provide quick proofs of known results and derive new ones. We explain how the factors appearing in the three-term factorisation determine the local and asymptotic behaviour of the law of the exponential functional. We describe the behaviour of the tail distribution at infinity and of the distribution at zero under some mild assumptions.
\end{abstract}


\section{Introduction}

Let $\xi=(\xi_t , t\ge0)$ be a real valued L\'evy process killed at an independent exponential time $\zeta$ of parameter $q\ge 0$ and $\xi_t =-\infty$ for all $t\ge\zeta$. When $q=0$, this states that $\zeta=\infty$ and that $\lim_{t\to\infty}\xi_{t}=-\infty$ almost surely.

According to Theorem 1 in \cite{BY}, the above are the only cases under which the {\it exponential functional} associated to $\xi$:

\begin{equation*}
I_\xi:=\int^{\zeta}_{0}ds\,e^{\xi_{s}}=\int^{\infty}_{0}ds\,e^{\xi_{s}},
\end{equation*}
is finite almost surely. Exponential functionals are ubiquitous objects in probability theory and stochastic modelling, and their study was initiated by Urbanik~\cite{urbanik}. Because of its importance, the obtainment of distributional properties and tools for characterising its law or asymptotic behaviour has been at the source of many researches in the last two decades, see e.g.~\cite{CPY, yor-yellow-book, BY, patie-pardo-savov, patielaw, patie-savov1, patie-savov-cras, MR3370671}  and, more recently \cite{patie+savov+1,BLR,haas,Min-Sav}. Among the latter references,  we underline \cite{patie+savov+1} since it surveys a large part of the existing literature and, furthermore, develops powerful analytical methods, based on complex analysis, to obtain sharp distributional properties of $I_\xi$.

In this article, our contribution to the very active area of research on exponential functionals is twofold. First, we prove a new integral equation for the probability density function of $I_\xi$ that is written in terms of the so-called {\it renewal} (or {\it potential}) {\it measure} $U_\xi$ of $\xi$, see Theorem~\ref{keythm}. This opens the gate to the use of renewal theoretic arguments for the study of the distribution of $I_\xi$. And second, we provide a three-term (Wiener-Hopf type) factorisation of $I_\xi$, see Theorem~\ref{PPS2}, explaining how the factors involved determine the asymptotic and local behaviour of its distribution. We apply our results to describe the behaviour of the tail distribution at infinity of $I_{\xi}$ and of the distribution at zero under some mild assumptions, see Section~\ref{EstimatesdistributionI}. Our initial motivation for pursuing this work was to find useful alternative tools when expressions for the {\it characteristic exponent} of $\xi$ are not explicit.

 Below, we further discuss our main results introduced in the previous paragraph. We refer to Section~\ref{Notation} for definitions.

In \cite{bertoin+lindner+maller} it was proved that the probability distribution of $I_\xi$ admits a density function. Under some assumptions, this density function satisfies integral and integro-differential equations given in terms of the {\it characteristics} of the L\'evy process $\xi$ (see e.g.~\cite{CPY,Kuznetsov-pardo-savov, Pardo+rivero+Vanschaik,BLR}). Equations of this type are very helpful in obtaining several distributional properties. Drawing a parallel between the latter references and our integral equation presented in~Theorem~\ref{keythm} in Section~\ref{Sec:IntegralEquation}, one can think of those equations as given in terms of the infinitesimal generator of $\xi$, whilst ours is given in terms of the potential operator of $\xi$. One would hence expect that it is possible to pass from one to the other and, in some sense, this is the case, see e.g. Corollary~\ref{int-equation} where we establish an equation very closely related to the one first appeared in \cite{CPY}. However, the equation in Corollary~\ref{int-equation} is given in terms of the characteristics of the {\it upward} and {\it downward ladder height processes} to emphasise the role played by the Wiener-Hopf factorisation of $\xi$.

The three-term factorisation of $I_\xi$, presented in~Theorem~\ref{PPS2} in Section~\ref{threeterm}, has the form
\begin{equation}\label{3term}
I_\xi\stackrel{Law}{=}e^{\overline{\xi}_{\infty}}\frac{I_{-\widehat{H}}}{R_{H}},
\end{equation} 
where the three terms on the right-hand side are independent random variables. The term $\overline{\xi}_{\infty}$ denotes the supremum of $\xi$, $R_H$ is a variable whose law depends on the  upward ladder height process of $\xi$, whilst $I_{-\widehat{H}}$ is a copy of the exponential functional associated to the corresponding  downward ladder height process. Notice that the law of the former two variables depends on the behaviour of the supremum, whilst that of the latter depends on the behaviour of the infimum.  
Equation~\eqref{3term} complements the Wiener-Hopf type factorisation for exponential functionals obtained by Pardo, Patie and Savov \cite{patie-pardo-savov} and later generalised by Patie and Savov \cite{patie-savov1, patie-savov-cras}.

In Section~\ref{EstimatesdistributionI}, we explain in detail how our main results provide an alternative to some of the techniques commonly used in the study of exponential functionals and hence provide quick proofs of some known results, obtain new ones (notably Theorem~\ref{th:asym1} and Theorem~\ref{asym:2}). Furthermore, we unravel the dependence of the factors in the three-term factorisation~\eqref{3term} on the asymptotic and local behaviour of the distribution of $I_\xi$. This adds up to the available tool kit to study exponential functionals of L\'evy processes. 

Let us mention that the possibility of applying renewal theory to the study of exponential functionals was indirectly known by Goldie~\cite{goldie}, where, for {\it perpetuities}, renewal sequences are built to study their tail distributions. The exponential functional $I_\xi$ is a perpetuity, since, by the strong Markov property,
\begin{equation}\label{perpetuity}
I_\xi\stackrel{Law}{=}Q+M I,
\end{equation}
for certain $(Q,M)$ independent of $I_\xi$. The advantages of our approach over Goldie's are two. First, the connection with renewal measures through the integral equation in Theorem~\ref{keythm} is transparent and, second, the factorisations of $I_\xi$ in Theorem~\ref{PPS} and Theorem~\ref{PPS2} allow us to describe both the behaviour of the tail distribution at infinity (Theorem~\ref{th:asym1} and Theorem~\ref{VRrtb}) and that of the distribution at zero (Theorem~\ref{asym:2} and Theorem~\ref{corollarysmtg}). Furthermore, we use the integral equation in~Theorem~\ref{keythm} and Stone's decomposition theorem (Appendix~\ref{App:Stone}) to quantify the rate of convergence of the estimates in Section~\ref{rateconvergence}. These results are not as sharp as those obtained in \cite{patie-savov1, haas, Min-Sav} but it makes the point of finding estimates in terms of the L\'evy measure of $\xi$ instead of the characteristic exponent.

Regarding the natural question of determining whether there is a unique solution to the integral equation in Theorem~\ref{keythm}, we conjecture that the answer is positive in general. A first version of the present article partially answered this question but we have omitted the details here for the purpose of the briefness of the exposition.

\vspace{3mm}
\noindent
\textbf{Organisation of the article.}
In Section~\ref{Notation} we introduce some notation and state versions of the Wiener-Hopf factorisation for a L\'evy process. The main results of the paper are contained in Section \ref{mainsec}. In particular, Theorem~\ref{keythm} establishes an integral equation for the density of the exponential functional $I_\xi$ and, in Theorem~\ref{PPS2}, we provide a three-term factorisation of $I_\xi$. Some consequences of our results regarding the behaviour at infinity, then at zero, of the distribution of the exponential functional are collected in Section~\ref{EstimatesdistributionI}. In Section \ref{theothereq} we obtain further distributional identities for $I_\xi$. Finally, proofs of the main results are presented in Section~\ref{proofs}.

\vspace{3mm}
\noindent
\textbf{Acknowledgements.}
The second author acknowledges support from EPSRC grant number EP/M001784/1 and from CONACyT grant 234644. This work was undertaken whilst the second author was on sabbatical at the University of Bath, he gratefully acknowledges the kind hospitality of the department and university.

\subsection{Notation and preliminaries}
\label{Notation}

Here we introduce some notation and preliminary notions that we use throughout this work. Most of the results presented in this section are known (see e.g. \cite{bertoin-book, doney-book, kyprianou-book} and references therein), except for Lemma~\ref{lemma:fR} which plays a crucial role in Section~\ref{threeterm}.

 As mentioned above, $\xi=(\xi_t , t\ge0)$ will denote a real valued L\'evy process killed at an independent exponential random variable $\zeta$ of parameter $q\ge0$ and $\xi_t =-\infty$ for all $t\ge\zeta$. When $q=0$, then $\zeta=\infty$ and we assume $\lim_{t\to\infty}\xi_{t}=-\infty$ a.s.

The process $\xi$ is characterised by a quadruplet $(q, a, Q, \Pi)$ where $q\geq 0$ is the killing rate, $a\in\re$ is the linear term, $Q\ge0$ is the Gaussian term and $\Pi$ is the L\'evy measure. The quadruplet $(q, a, Q, \Pi)$ is also known as the {\it characteristics} of the process $\xi$.  The {\it characteristic exponent} of $\xi$ is denoted by $\Psi:\re\to\mathbb{C},$ and it is given by
$$\Psi(\lambda):=q+ai\lambda + \frac{Q^{2}\lambda^{2}}{2}+\int_{\re\setminus\{0\}}\Pi(dx) (1-e^{i\lambda x}-i\lambda x1_{\{|x|<1\}}),\qquad \lambda \in \re,$$
satisfying
$$\er\left(e^{i\lambda \xi_{1}}, 1<\zeta \right)=\exp\{-t\Psi(\lambda)\}.$$

The {\it renewal} (or {\it potential}) {\it measure} of $\xi$ is the $\sigma$-finite measure on $\mathbb{R}$ defined by
\begin{equation}\label{eq:renewalmeasure}
U_\xi(dy):=\mathbb{E}\left(\int^{\zeta}_{0} dt 1_{\{\xi_{t}\in dy\}}\right)=\mathbb{E}\left(\int^{\infty}_{0} dt 1_{\{\xi_{t}\in dy\}}\right),\qquad y\in \re.
\end{equation}
Naturally, we may see $U_\xi$ as an integral operator acting on suitable measurable functions $f:\re\to[0,\infty)$ as
\begin{equation}\label{eq:potentialmeasure}
\int_{\re}U_\xi(dy)f(y)=\int_{0}^{\infty}dt\e\left(f(\xi_t)\right).
\end{equation}
The name `renewal measure' is justified by the fact that $U_\xi(dy)$ can be written as
\begin{equation}\label{renewalU}
\begin{split}
U_\xi(dy)=\sum_{n\geq 1}F^{*n}(dy),
\end{split}
\end{equation} 
where $F(dy):=\p(\xi_{\ee}\in dy)$  and $\ee$ is an exponential random variable of parameter $1$ independent of $\xi$.

\subsubsection{Wiener-Hopf factorisation}
\label{subsection11}

We denote by $H=(H_{t}, t\geq 0)$, resp. $\widehat{H}=(\widehat{H}_{t}, t\geq 0)$, the upward, resp. downward, ladder height process of $\xi$. Recall that $H$ and $\widehat{H}$ are {\it subordinators}, that is non-decreasing L\'evy processes. We denote by $(\kappa(q,0), b, \Pi_{H})$, resp. $(\widehat{\kappa}(q,0), \widehat{b}, \Pi_{\widehat{H}}),$ the killing term, linear term and L\'evy measure of $H$, resp. $\widehat{H}$. The Laplace exponent of $H,$ resp. $\widehat{H},$ will be denoted by $\kappa(q,\cdot)$, resp. $\widehat{\kappa}(q,\cdot)$, and it can be expressed as 
$$\kappa(q,\lambda)=-\log\e\left(e^{-\lambda H_{1}}\right)=\kappa(q,0)+b\lambda+\int_{(0,\infty)}\Pi_{H}(dx) (1-e^{-\lambda x}),\ \lambda\geq 0,$$ 
and a similar formula holds for $\widehat{\kappa}(q,\cdot).$  Since we are assuming that either $\xi$ has a finite lifetime or drifts towards $-\infty$, ${H}$ has a finite lifetime, equiv. $\kappa(q,0)>0,$ and $\widehat{H}$ has a finite lifetime if and only if $\xi$ has a finite lifetime.  

The tail L\'evy measure of the upward ladder height subordinator $H$ is defined by $\overline{\Pi}_{H}(x):=\Pi_{H}(x,\infty)$, $x>0$, and a similar formula holds for $\widehat{H}$.

The celebrated {\it Wiener-Hopf factorisation}, in its Fourier transform representation, establishes that, for $q>0$
\begin{equation}
\frac{q}{\Psi(\lambda)}=\frac{\kappa(q,0)}{\kappa(q,-i\lambda)}\frac{\widehat{\kappa}(q,0)}{\widehat{\kappa}(q,i\lambda)}, \qquad q=c\kappa(q,0)\widehat{\kappa}(q,0),\qquad \lambda \in \re,
\end{equation}for a constant $c>0$; while, for $q=0$
\begin{equation}
\Psi(\lambda)=c\kappa(0,-i\lambda)\widehat{\kappa}(q,i\lambda), \qquad \lambda \in \re.
\end{equation}for a constant $c>0$.

The above factorisation is at the root of the development of the fluctuation theory of L\'evy processes and can be expressed in many other equivalent forms. One of them, obtained by Fourier inversion, ensures that the renewal measure~\eqref{eq:renewalmeasure} can be decomposed in terms of the renewal measures of $H$ and $\widehat{H}$, given by
$$V_{H}(dy)=\er\left(\int^{\infty}_{0}dt1_{\{H_{t}\in dy\}}\right),\qquad V_{\widehat{H}}(dy)=\er\left(\int^{\infty}_{0}dt1_{\{\widehat{H}_{t}\in dy\}}\right),\ y\geq 0,$$
by means of the formula
\begin{equation}\label{WHpot}
\int_{\re}U_\xi(dy)f(y)=K\int_{[0,\infty)}\int_{[0,\infty)}V_{H}(du)V_{\widehat{H}}(dv)f(u-v), 
\end{equation}
for every $f:\re\to \re$ measurable and positive, where $0<K<\infty$ is a fixed constant whose value depends only on the normalisation of the local time at the supremum and infimum of $\xi$, and we can assume without loss of generality that $K=1$. Another version of the Wiener-Hopf factorisation, obtained by Vigon~\cite{vigon},  establishes a relation between the measures $\Pi,$ $\Pi_{H}$ and $V_{\widehat{H}},$ see the Section 5.3 in \cite{doney-book} or Theorem 6.22 in \cite{kyprianou-book} for a precise statement. 

Notice that, since $H$ has finite lifetime,  the measure $V_{H}$ is a finite measure and the law of the overall supremum of $\xi$, namely $\overline{\xi}_{\infty}:=\sup_{0\leq s<\zeta}\xi_{s}$, is given by
\begin{equation}
\label{supremum}
\p(\overline{\xi}_{\infty}\in dy)=\kappa(q,0) V_{H}(dy),\qquad y\geq 0.
\end{equation}
This equation is at the root of the three-term factorisation of the exponential functional $I_{\xi}$ presented in Theorem~\ref{PPS2}. Moreover, if $\xi$ has a finite lifetime (equiv. $q>0$) then $\widehat{\kappa}(q,0)>0$ and, by duality, the law of $\underline{\xi}_{\infty}:=-\inf_{0\leq s<\zeta}\xi_{s}$  is given by
\begin{equation}
\label{infimum}
\p(\underline{\xi}_{\infty}\in dy)=\widehat{\kappa}(q,0) V_{\widehat{H}}(dy),\qquad y\geq 0.
\end{equation}

\subsubsection{Residual exponential functionals}\label{residualexponential}
In what follows, we will require the following key result. Hereafter and unless otherwise stated, for an $n$-tuple of non-negative random variables $(X_1,X_2,\dots,X_n)$ in a product $\prod_{i=1}^{n}X_{i}$, we will assume that there is independence of the factors.

\begin{lemma}[Bertoin and Yor \cite{BY2001, BY}]\label{lemmaBY}
Let $\xi=-\sigma,$ with $\sigma$ a possibly killed subordinator with Laplace exponent defined by
$$\phi(\lambda):=-\log\e\left(e^{-\lambda\sigma_{1}}, 1<\zeta\right),\qquad \lambda \geq 0.$$
Consider the corresponding exponential functional $I_{-\sigma}$. Then, there exists a (non-negative) random variable $R_{\sigma}$ whose law is determined by its entire moments, which satisfy the recurrence relation 
\begin{equation}\label{recurrencerelation}
\e(R^{\lambda}_{\sigma})=\phi(\lambda)\e(R^{\lambda-1}_{\sigma}), \qquad \lambda>0.
\end{equation} 
The identity is true in the limit sense if $\lambda=0.$ In particular,
$$\e(R^{n}_{\sigma})=\prod^{n}_{k=1}\phi(k),\qquad n\in \nr.$$ 
Moreover,
$$R_{\sigma}I_{-\sigma}\stackrel{Law}{=}\ee_{1},$$ with $\ee_{1}$ an exponential random variable of parameter $1$. 
\end{lemma} 

The random variable $R_{\sigma}$ in Lemma~\ref{lemmaBY} is called the {\it residual exponential functional} associated to the subordinator $\sigma$. Hirsch and Yor~\cite{Hirsch-Yor} translated the recurrence relation~\eqref{recurrencerelation} into a functional equation for the density of $R_{\sigma}$, when it exists, under the assumption that the renewal measure $V_{\sigma}$ of $\sigma$, has a density on $(0,\infty)$. Namely, if $v_{\sigma}$ denotes the density of $V_{\sigma}$, they proved that $R_{\sigma}$ has a density $f_{R_{\sigma}}$ and it satisfies the equation 
$$f_{R_{\sigma}}(t)=\int^{\infty}_{t}dy\,v_{\sigma}(\log(y/t)) f_{R_{\sigma}}(y) =\int^{\infty}_{0}V_{\sigma}(dz) f_{R_{\sigma}}(te^{z})te^{z}.$$

If we remove the assumption that $V_{\sigma}$ has a density on $(0,\infty)$, the identity above is preserved but in measure form:

\begin{lemma}
\label{lemma:fR}
Using the notation of Lemma~\ref{lemmaBY},   we have the equality of measures
$$\frac{1}{t}\p(R_{\sigma}\in dt)=\int_{[0,\infty)}V_{\sigma}(dy)\p(e^{-y}R_{\sigma}\in dt),\qquad \text{on} \ (0,\infty).$$ 
\end{lemma} 

We provide a probabilistic proof of Lemma \ref{lemma:fR} in Section \ref{proofs}.

\section{Main results}
\label{mainsec}

In this section we state the main results of this article. We use the terminology and notation introduced in Section~\ref{Notation}.  Recall that the exponential functional associated with the L\'evy process $\xi$ is defined by the random variable
\begin{equation*}
I_\xi=\int^{\zeta}_{0}ds\,e^{\xi_{s}} =\int^{\infty}_{0}ds\,e^{\xi_{s}},
\end{equation*} 
and that its probability distribution admits a density function (see e.g. \cite{bertoin+lindner+maller} and \cite{Pardo+rivero+Vanschaik}).

In the following, we will write $I=I_\xi$ when the dependence on the underlying process $\xi$ is clear. Analogously, we will write $U=U_\xi$ for the renewal measure.

\subsection{Integral equation}
\label{Sec:IntegralEquation}

Here, we establish a new integral equation for the probability density function of the exponential functional $I$ that is given in terms of the renewal measure of $\xi$. As commented in the introduction, this will allow us to use renewal theoretic arguments for the study of the distribution of $I$, see e.g. Sections~\ref{rateconvergence} and~\ref{theothereq}, and Remark~\ref{remarkrenewaltail}.

\begin{theorem}\label{keythm}
Let $U(dy)$ be the renewal measure of $\xi$ and denote by $k$ the probability density function of the exponential functional $I$. Then
\begin{equation}\label{pot-eq}
\int^{\infty}_{t}ds\,k(s)=\int_{\re}U(dy) k(te^{-y}),\qquad  \text{a.e. $t$ on } (0,\infty).
\end{equation}
\end{theorem}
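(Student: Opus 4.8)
The plan is to start from the well-known self-similarity / renewal identity for the exponential functional and translate it into the claimed integral equation via Fubini and an exponential change of variables. Recall the classical decomposition: conditioning on the process $\xi$ up to a small time, or more robustly, using the strong Markov property at an independent exponential time, one gets for any independent exponential random variable $\mathbf{e}_p$ of rate $p>0$ a factorisation relating $I$ to $\int_0^{\mathbf{e}_p}e^{\xi_s}\,ds$ plus $e^{\xi_{\mathbf{e}_p}}I'$ with $I'$ an independent copy of $I$. Rather than pushing $p\to\infty$ I would work directly with the occupation-measure identity: for a nonnegative measurable $f$,
\begin{equation*}
\e\!\left(\int_0^{\zeta} f(e^{\xi_s})\,ds\right)=\int_{\re} f(e^{y})\,U(dy).
\end{equation*}
The idea is to apply this to a cleverly chosen $f$ so that the left-hand side reproduces $\e(I\,1_{\{I>t\}})$-type quantities, or directly the tail $\p(I>t)=\int_t^\infty k(s)\,ds$.

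The key observation I would exploit is the pathwise identity obtained by splitting the integral defining $I$ at a shift: for fixed $t>0$ and using the Markov property, $I = \int_0^{t}e^{\xi_s}\,ds + e^{\xi_t}I_t'$ where $I_t'=\int_0^{\zeta-t}e^{\xi_{t+s}-\xi_t}ds$ is, conditionally on $\mathcal{F}_t$, an independent copy of $I$. More useful here is to run this at a random time. Concretely, I would compute $\e\big(g(I)\big)$ for $g$ of the form $g(x)=1_{\{x>t\}}$ by writing, with $A_u=\int_0^u e^{\xi_s}ds$ the additive functional and $\tau$ chosen appropriately, a representation of $\{I>t\}$ as an event about the first time the running integral exceeds a level. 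Differentiating the resulting identity in $t$ and recognising the occupation density of $\xi$ on the right should produce exactly $\int_{\re}k(te^{-y})U(dy)$. The exponential scaling $k(te^{-y})$ on the right is the signature of the change of variables $x\mapsto \log x$ combined with the multiplicative structure $I \overset{d}{=} (\text{something}) + e^{\xi}I'$.

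Here is the cleanest route I would actually try to write up. Use the elementary identity $\int_t^\infty k(s)\,ds=\p(I>t)=\e\!\big(1_{\{I>t\}}\big)$ and the pathwise decomposition $I=A_\zeta$ with $A_u\uparrow$. For $t>0$ let $\sigma_t=\inf\{u:A_u>t\}$. On $\{I>t\}$ we have $\sigma_t<\zeta$, and by the strong Markov property at $\sigma_t$ the post-$\sigma_t$ process is an independent copy of $\xi$ started afresh; moreover $A_{\sigma_t}=t$ and the overshoot $I-t=e^{\xi_{\sigma_t}}I'$ with $I'$ an independent copy of $I$ and $\xi_{\sigma_t}$ the value of $\xi$ at that time. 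Thus
\begin{equation*}
\p(I>t)=\e\!\left(1_{\{\sigma_t<\zeta\}}\,h\big(\xi_{\sigma_t}\big)\right),\qquad h(y)=\p\big(e^{y}I'>0\big)=1,
\end{equation*}
which is not yet informative; the substance comes from differentiating in $t$. Since $\tfrac{d}{dt}1_{\{A_u>t\}}=-\delta_t(A_u)$ in the distributional sense, writing $\p(I>t)=\e\!\big(\int_0^\zeta \delta\text{-mass considerations}\big)$ one gets $-\tfrac{d}{dt}\p(I>t)=k(t)$ on one side, and on the other side, after the change of variables, a mixture of $k(te^{-y})$ integrated against the law of $\xi_{\sigma_t}$ weighted by the occupation measure; the chain rule $dA_u=e^{\xi_u}du$ converts the time-occupation measure $U$ into the stated formula. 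The main obstacle, and the step I would spend the most care on, is making this differentiation rigorous: justifying that $t\mapsto\p(I>t)$ is absolutely continuous with density $k$ (this is where the results of \cite{bertoin+lindner+maller} and \cite{Pardo+rivero+Vanschaik} on existence of $k$ enter), controlling the possible atom of $U$ at $0$ when $\xi$ has a Gaussian component or is of finite variation with drift, and exchanging $\tfrac{d}{dt}$ with the expectation over $\xi$ — a monotone/dominated convergence argument using that $k\in L^1$ and $U$ is a Radon measure that is finite on compacts should suffice, but the behaviour near $t=0$ and the integrability of $k(te^{-y})$ for large negative $y$ (equivalently small argument) must be checked against known bounds on $k$ near $0$.
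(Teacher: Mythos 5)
There is a genuine gap: the computation that would actually produce the identity is never carried out, and the mechanism you propose for it points in the wrong direction. You plan to differentiate $\p(I>t)$ in $t$, obtaining $-\tfrac{d}{dt}\p(I>t)=k(t)$ on one side and ``a mixture of $k(te^{-y})$'' on the other. But the identity to be proved equates the \emph{tail} $\int_t^\infty k(s)\,ds$ with $\int_{\re}k(te^{-y})U(dy)$; differentiating in $t$ would put $k(t)$ on the left and $\int e^{-y}k'(te^{-y})U(dy)$ on the right, which requires a derivative of $k$ that is not available. The occupation measure $U$ does not arise from differentiation in the space variable at all: it arises from integrating over the time parameter $u\in(0,\zeta)$ (equivalently over the level $v=A_u$, via $dA_u=e^{\xi_u}du$). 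The step you label ``$\delta$-mass considerations'' is precisely where the proof lives, and as sketched it would not close. Your first occupation-measure idea can in fact be completed, but by integration rather than differentiation: writing $\int_{\re}k(te^{-y})U(dy)=\e\bigl(\int_0^\zeta du\,k(te^{-\xi_u})\bigr)$, changing variables $v=A_u$ so that $du=e^{-\xi_{\sigma_v}}dv$, and noting that by the strong Markov property at $\sigma_v$ the quantity $\e\bigl(e^{-\xi_{\sigma_v}}k(te^{-\xi_{\sigma_v}})1_{\{v<I\}}\bigr)$ is the density at $t$ of $I-v$ on $\{I>v\}$, i.e.\ $k(t+v)$; integrating over $v\in(0,\infty)$ then gives $\int_t^\infty k(s)\,ds$. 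Even this route needs care, since the ``conditional density'' identifications only hold for a.e.\ $t$ and must be justified through Fubini.

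The paper avoids all of these regularity issues by working at the level of Laplace transforms. Its starting point is the elementary pathwise telescoping identity
\begin{equation*}
1-\exp\Bigl\{-\lambda\int_0^\zeta e^{\xi_s}ds\Bigr\}=\lambda\int_0^\zeta du\, e^{\xi_u}\exp\Bigl\{-\lambda\int_u^\zeta e^{\xi_s}ds\Bigr\},
\end{equation*}
after which the Markov property replaces $\int_u^\zeta e^{\xi_s}ds$ by $e^{\xi_u}\widetilde I$ with $\widetilde I$ an independent copy of $I$, Fubini turns the $du$-integral into an integral against $U$, and comparison with $\e(1-e^{-\lambda I})=\lambda\int_0^\infty e^{-\lambda t}\p(I>t)\,dt$ identifies the two measures. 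This shares your underlying idea (split $I$ at an intermediate time, use stationary independent increments, sweep over all times), but the Laplace-transform formulation makes every interchange a routine application of Fubini and requires no differentiation of $k$ or of the tail, no control of atoms of $U$, and no a priori bounds on $k$ near $0$.
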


\begin{remark}
When $\xi$ is the negative of a subordinator, the validity of (\ref{pot-eq}) was established in \cite{Pardo+rivero+Vanschaik} using an identification of the moments of $I$. The proof of Theorem~\ref{keythm} provided in Section \ref{proofs} uses a different and elementary argument.
\end{remark}

In terms of the Wiener-Hopf factorisation of the renewal measure $U$ given in~\eqref{WHpot}, the integral equation~(\ref{pot-eq}) can be expressed, using~\eqref{supremum}, as
\begin{equation*}
\p(I>t)=\int_{[0,\infty)}\p(\overline{\xi}_{\infty}\in dy)\mathcal{L}(te^{-y}), \qquad t>0,
\end{equation*}
with $$ \mathcal{L}(s):=\frac{1}{\kappa(q,0)}\int_{[0,\infty)}V_{\widehat{H}}(dz)k(se^{z}), \qquad s>0.$$
Analogously, using~\eqref{infimum}, if $\xi$ has a finite lifetime then the integral equation~\eqref{pot-eq} reads as
\begin{equation*}
\kappa(q,0)\widehat{\kappa}(q,0)\p(I>t)=\er\left(k(t\exp\{\underline{\xi}_{\infty}-\overline{\xi}_{\infty}\})\right),\qquad t\geq 0,
\end{equation*}
where $\underline{\xi}_{\infty}$ and $\overline{\xi}_{\infty}$ are taken as independent copies of the overall supremum and infimum, respectively.

For $z\in\mathbb{C}$, let $\Re z$ denote the real part of $z$. A first consequence of Theorem~\ref{keythm} is the following. 

\begin{corollary}\label{corollary...1}
Let $C$ be the set defined by $$C:=\left\{\lambda\in\mathbb{C}: \Re\lambda> 0\ \text{and}\ |\e[e^{\lambda\xi_{1}}]|<1\right\},$$ and denote still by $\Psi$ the analytic extension of the characteristic exponent of $\xi$ on $C$. The following identity holds true for all $\beta\in C,$  
\begin{equation}\label{recurrence}
\er(I^{\beta})=\frac{\beta}{\Psi(-i\beta)}\er(I^{\beta-1}).
\end{equation}
\end{corollary}

Some versions of Corollary \ref{corollary...1} have been obtained for instance in \cite{CPY, Maulik-Zwart, rivero-convequiv, Kuznetsov-pardo-savov,  patie+savov+book}. In particular, in \cite{patie+savov+book} the authors used a similar identity to identify the Mellin transform of $I$, in which most of their developments are rooted. We are aware that the identity (\ref{keythm}), or some of its previous versions, can be deduced from the identity (\ref{recurrence}) using Fourier inversion methods, nevertheless we prefer to establish (\ref{keythm}) using the probabilistic approach coming from Theorem~\ref{keythm}. This happens for other results, as for instance Lemma~\ref{lemma:fR}, where we have also chosen to give a probabilistic proof.

\subsection{Three-term factorisation}
\label{threeterm}

In this section, we state a three-term factorisation (Theorem~\ref{PPS2}) of the exponential functional $I$ which complements the two-term factorisation (Theorem~\ref{PPS}) obtained by Pardo, Patie and Savov \cite{patie-pardo-savov} and later generalised by Patie and Savov \cite{patie-savov1,patie-savov-cras}.  The proof of Theorem~\ref{PPS2} is presented in Section~\ref{proofs}. The factorisations~\eqref{fact-pps} and~\eqref{equationppsext}  will be essential in deriving the asymptotic and local behaviour of the distribution of $I$ in the forthcoming Section~\ref{EstimatesdistributionI}.

\begin{theorem}[Pardo, Patie and Savov \cite{patie-pardo-savov}; Patie and Savov \cite{patie-savov1,patie-savov-cras}]\label{PPS}
Let $R_{H}$ be the residual exponential functional associated to the upward ladder height subordinator ${H}$ and $I_{-\widehat{H}}$ be the exponential functional of the negative of the downward ladder height $\widehat{H}.$ Assume $I_{-\widehat{H}}$ and $R_{H}$ are independent. We have $\e(R^{-1}_{H})=(\kappa(q,0))^{-1}$ and the random variable $J_{H}$, whose law is defined by
$$\p(J_{H}\in dy)={\kappa(q,0)y}\p\left(\frac{1}{R_{H}}\in dy\right),\qquad y\geq 0,$$ is such that 
\begin{equation}
\label{fact-pps}
I\stackrel{Law}{=}J_{H} I_{-\widehat{H}}.
\end{equation}
\end{theorem}

\begin{theorem}\label{PPS2}
With the notation of Theorem \ref{PPS}, we have the identities 
 \begin{equation}
 \label{equationppsext}
 J_{H}\stackrel{Law}{=}e^{\overline{\xi}_{\infty}} \frac{1}{R_{H}},\qquad I\stackrel{Law}{=}e^{\overline{\xi}_{\infty}} \frac{I_{-\widehat{H}}}{R_{H}},
 \end{equation}
where $\overline{\xi}_{\infty}=\sup_{0\leq s<\zeta}\xi_{s}$. 
\end{theorem}

Theorems \ref{PPS} and \ref{PPS2} have a large potential of applicability, as they give the possibility of deriving properties of exponential functionals using exponential and residual  exponential functionals of subordinators, which are in general simpler objects (see e.g. Proposition~\ref{corollary....2}). In \cite{patie-savov-cras}, the factorisation~\eqref{fact-pps} has been used to provide a useful formula for the Mellin transform of $I$ in terms of generalised Gamma functions, and, in \cite{patie+savov+book}, to obtain spectral decompositions of the semigroup of self-similar Markov and related processes.

 Using Lemma~\ref{lemmaBY} and Theorems~\ref{keythm} and~\ref{PPS}, we prove in Section~\ref{proofs} the following proposition regarding finiteness of some negative moments of $I$. In Theorem 2.18 in \cite{patie+savov+1} further information about the finiteness of the moments can be found.

\begin{proposition}\label{corollary....2}
One has
$$\e(I^{-\delta})<\infty,\qquad \forall \delta\in(0,1).$$ Furthermore, $\e(I^{-1})<\infty$ if and only if $q=0$ and $\e(\xi_{1})<\infty$. In this case, $\e(I^{-1})=\e(\xi_{1})=\kappa(0,0)\e(\widehat{H}_{1})$.
\end{proposition}

Proposition~\ref{corollary....2} can be useful in applying the recurrence of Corollary~\ref{corollary...1}. Indeed, it follows from Proposition~\ref{corollary....2}  and~\eqref{recurrence} that, for $\delta>0,$ $\e(I^{\delta})<\infty$ if and only if $\e(e^{\delta\xi_{1}}, 1<\zeta)<1$ (this fact was first proved in \cite{rivero-convequiv}). Notice also that the finiteness of the moments of negative (resp. positive) order of $I$ depends only on those of $I_{-\widehat{H}}$ (resp.  of $R_{H}$).

\subsection{Estimates for the distribution of $I$}
\label{EstimatesdistributionI}

In this section we state some results with respect to the behaviour at infinity, then at zero, of the distribution of the exponential functional $I$ that can be deduced from the main results presented in Section \ref{threeterm}. 

 The results in Sections~\ref{beinfinity} and~\ref{bezero} do not seem to imply similar information for the density function of $I$ or its derivatives. Nevertheless, other results have appeared in recent years studying this problem, see e.g. \cite{patie+savov+1, haas, Min-Sav}. We stress that our approach complements that developed in  the latter references  in the sense that it explains how the factors in the three-term factorisation~\eqref{equationppsext} determine the asymptotic and local behaviour of the distribution of $I$. Moreover, Theorems~\ref{VRrtb} and~\ref{corollarysmtg} provide estimates in terms of the L\'evy measure of $\xi$ instead of its characteristic exponent, which can be a useful alternative when expressions for the characteristic exponent are not explicit.

\subsubsection{The behaviour at infinity}\label{beinfinity}
Throughout this subsection we will assume that $\xi$ is non-monotone, that is, it is not the negative of a subordinator (notice that this does not exclude the case where $\xi$ is a killed subordinator, because such a process has no monotone paths as it eventually jumps to its cemetery state $-\infty$).  We exclude the monotone case from our present discussion since this case has been carefully analysed in \cite{haas} using an integral equation for the density of $I$ given in terms of the characteristics of the subordinator, and in \cite{Min-Sav} using the Mellin transform.

In \cite{rivero-convequiv}, it has been conjectured that the tail behaviour of the distribution of $I$ should satisfy
\begin{equation}\label{conjecture}
\mathbb{P}(\log I>t)\sim c\mathbb{P}(\sup_{s>0}\xi_{s}>t),\qquad t\to\infty,
\end{equation} for some constant $c\in(0,\infty).$
This in turn is based on the heuristic that the large values of $I$ are due to the large values of $e^{\overline{\xi}_{\infty}}.$ The second identity in law in Theorem~\ref{PPS2}
establishes that $$I\stackrel{Law}{=}e^{\overline{\xi}_{\infty}} \frac{I_{-\widehat{H}}}{R_{H}},$$
which implies that there is an even stronger relation between the large values of $I$ and those of the absolute supremum of $\xi.$  To study the asymptotic behaviour of the tail distribution, the latter identity sets us in the context of determining when for independent random variables $X$ and $Y$ we have
$$\p(XY>t)\sim \mathcal{C}\p(X>t),\qquad t\to\infty,$$
 which, in a general setting, has no solution because of the wide range of distributions that are involved. Nevertheless, in the context of heavy tails, and motivated by Breiman's ~\cite{breiman} key result, very important results have emerged that allow to give a precise answer when either tail distribution is regularly varying (see e.g. \cite{jacobsenetal}). For our purposes, those results will provide the necessary tools to establish the conjecture (\ref{conjecture}) in Theorem~\ref{th:asym1}.  Since $I$ is a perpetuity, see~\eqref{perpetuity}, it follows from Theorem 4.1 in \cite{goldieandgrubel} that the law of $I$ has at least a power law (Pareto) tail
$$\liminf_{t\to\infty}\frac{\log\p(I>t)}{\log t}>-\infty.$$ A sharper result is proved in Theorem 2.11 item 1, in \cite{patie+savov+1}. With this information at hand, the following theorem establishes the described conjecture in the most general and tractable case among those possible for exponential functionals of L\'evy processes: that of regular variation. Notice that this theorem goes beyond the Cramer's case.
  
\begin{theorem}\label{th:asym1}
For $\alpha\geq 0,$ the following are equivalent:
\begin{enumerate}
\item[(i)] the function $t\mapsto \p(I>t)$ is regularly varying at infinity with index $-\alpha,$
\item[(ii)] the function $t\mapsto \p(e^{\overline{\xi}_{\infty}}>t)$ is regularly varying at infinity with index $-\alpha.$
\end{enumerate} 
In this case, 
$$\p(I>t)\sim \e\left(I^{\alpha}_{-\widehat{H}}R^{-\alpha}_{H}\right)\p\left(e^{\overline{\xi}_{\infty}}>t\right),\qquad t\to\infty.$$
\end{theorem}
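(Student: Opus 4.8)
The plan is to deduce Theorem~\ref{th:asym1} from the second identity in law of Theorem~\ref{PPS}, namely $I\stackrel{\text{Law}}{=}e^{\overline{\xi}_{\infty}}\,\frac{I_{-\widehat{H}}}{R_{H}}$, by applying a Breiman-type multiplicative-factor result. Write $X=e^{\overline{\xi}_{\infty}}$ and $Y=I_{-\widehat{H}}/R_{H}$ so that $I\stackrel{\text{Law}}{=}XY$ with $X,Y$ independent and $Y>0$ a.s. The whole point is to show that the tail of $I$ is regularly varying of index $-\alpha$ if and only if that of $X$ is, and, in that case, $\p(I>t)\sim \e(Y^{\alpha})\,\p(X>t)$ with $\e(Y^{\alpha})=\e(I_{-\widehat{H}}^{\alpha}R_{H}^{-\alpha})<\infty$.

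First I would record the needed integrability of the multiplier $Y$. For the direction (ii)$\Rightarrow$(i), Breiman's lemma (and its extensions, e.g.\ \cite{breiman}, \cite{jacobsenetal}) requires $\e(Y^{\alpha+\epsilon})<\infty$ for some $\epsilon>0$. Here $R_{H}^{-1}$ has moments of every negative order because $R_{H}$ has entire positive moments (Lemma~\ref{lemmaBY}), and in fact by the definition of $J_{H}$ in Theorem~\ref{PPS} the law of $1/R_{H}$ has a density times $y$, so all its moments up to a controlled order are finite; more to the point, $J_{H}\stackrel{\text{Law}}{=}e^{\overline{\xi}_{\infty}}/R_{H}$ so $\e(J_{H}^{\beta})<\infty$ exactly when $\e(e^{\beta\overline{\xi}_{\infty}})<\infty$, and the latter is controlled via $\kappa(q,\cdot)$. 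As for $I_{-\widehat{H}}$: by Lemma~\ref{lemmaBY}, $R_{\widehat{H}}I_{-\widehat{H}}\stackrel{\text{Law}}{=}\ee_1$, so $\e(I_{-\widehat{H}}^{\lambda})=\e(\ee_1^{\lambda})/\e(R_{\widehat{H}}^{\lambda})=\Gamma(1+\lambda)/\prod_{k=1}^{\lambda}\widehat\kappa(q,k)$ for integer $\lambda$, finite for all $\lambda\ge 0$; for non-integer $\lambda$ one interpolates, or simply uses that $I_{-\widehat{H}}\le \ee_1/R_{\widehat{H}}$-type domination is not available, so instead one notes $\e(I_{-\widehat{H}}^{\lambda})<\infty$ for every $\lambda>0$ by the classical moment bounds for exponential functionals of subordinators (e.g.\ \cite{BY}, \cite{CPY}). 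Hence $\e(Y^{\alpha+\epsilon})<\infty$ for every $\epsilon>0$ such that $\e(e^{(\alpha+\epsilon)\overline{\xi}_{\infty}})<\infty$; and since (ii) forces $\p(e^{\overline{\xi}_{\infty}}>t)$ to be regularly varying of index $-\alpha$, one can pick $\epsilon$ small enough that $\e(X^{\alpha+\epsilon})=\infty$ is consistent, but $\e(Y^{\alpha+\epsilon})<\infty$ holds regardless because $Y$'s law has exponentially light tails coming from $I_{-\widehat{H}}$ and $R_H^{-1}$. Once this moment condition is in place, Breiman's lemma yields $\p(I>t)=\p(XY>t)\sim\e(Y^{\alpha})\p(X>t)$, which is statement (i) together with the asymptotic equivalence. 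Note $\e(Y^\alpha)=\e(I_{-\widehat H}^\alpha)\e(R_H^{-\alpha})$ is finite and strictly positive.

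For the converse (i)$\Rightarrow$(ii), I would use a Tauberian/converse-to-Breiman argument: $I\stackrel{\text{Law}}{=}XY$ with $\e(Y^{\alpha+\epsilon})<\infty$ and $Y$ bounded below in the sense that $\p(Y>\delta)>0$ for every $\delta$ small; the standard statement (see \cite{jacobsenetal}, or the two-sided version of Breiman) is that if $\p(XY>t)$ is regularly varying of index $-\alpha$ and the multiplier has a finite $(\alpha+\epsilon)$-moment, then $\p(X>t)$ is regularly varying of the same index and $\p(XY>t)\sim\e(Y^{\alpha})\p(X>t)$. The only subtlety is to make sure $X$ itself is not trivially light-tailed: since $\xi$ is non-monotone, $\overline{\xi}_{\infty}$ is a genuinely unbounded random variable (the upward ladder subordinator $H$ is nondegenerate), so $X=e^{\overline{\xi}_{\infty}}$ has unbounded support and the converse lemma applies. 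Alternatively, and perhaps more cleanly, one can invoke the Goldie/Goldie--Gr\"ubel machinery for perpetuities directly from the identity \eqref{perpetuity} to get that the tail index of $I$ equals the unique $\alpha$ with $\e(e^{\alpha\xi_1},1<\zeta)=1$ whenever such $\alpha$ exists, and then reconcile with the ladder-height description $\e(e^{\alpha H_1})\cdot(\text{killing})=1$; but I expect the Breiman route to be shorter.

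The main obstacle I anticipate is the verification of the moment condition on the multiplier $Y=I_{-\widehat{H}}/R_{H}$ in the delicate range, and more precisely handling the case $\alpha=0$ (slowly varying tails), where ``$\e(Y^{\alpha+\epsilon})<\infty$ for some $\epsilon>0$'' must be read as $\e(Y^{\epsilon})<\infty$ for some small $\epsilon>0$ — which is immediate here — but where Breiman's lemma in its usual form needs a little care, typically replaced by a direct dominated-convergence argument on $\p(XY>t)/\p(X>t)=\e\big(\mathbf{1}_{\{X>t/Y\}}/\p(X>t)\big)$ using the uniform convergence theorem for regularly varying functions and a Potter-bound domination. A secondary but real difficulty is the converse direction: the ``converse to Breiman'' is genuinely more restrictive than Breiman itself and one must cite or reprove the precise version (as in \cite{jacobsenetal}) that applies when $Y$ has support unbounded away from $0$ and light tails; I would state it as a lemma and reduce to it. Everything else — the finiteness and positivity of $\e(I_{-\widehat{H}}^{\alpha}R_{H}^{-\alpha})$, the non-degeneracy of $\overline{\xi}_{\infty}$ in the non-monotone case — is routine given the fluctuation-theoretic facts recalled in Subsection~\ref{subsection11} and Lemma~\ref{lemmaBY}.
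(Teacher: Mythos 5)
Your overall strategy coincides with the paper's: apply Breiman's lemma to the factorisation $I\stackrel{\text{Law}}{=}e^{\overline{\xi}_{\infty}}\,I_{-\widehat{H}}/R_{H}$ for (ii)$\Rightarrow$(i), and a converse (cancellation) result for (i)$\Rightarrow$(ii). But there are two genuine gaps. First, the moment condition on the multiplier $Y=I_{-\widehat{H}}/R_{H}$: your claim that $\e(Y^{\alpha+\epsilon})<\infty$ ``holds regardless because $Y$'s law has exponentially light tails coming from $I_{-\widehat{H}}$ and $R_{H}^{-1}$'' is false for the factor $R_{H}^{-1}$. The negative moments of $R_{H}$ are governed by the recurrence $\e(R_{H}^{\lambda})=\kappa(q,\lambda)\e(R_{H}^{\lambda-1})$ and hence by the exponential moments of $H$, i.e.\ of $\overline{\xi}_{\infty}$; under (ii) one typically has $\e(e^{(\alpha+\epsilon)\overline{\xi}_{\infty}})=\infty$ for every $\epsilon>0$, so nothing is automatic, and your own first attempt (requiring $\e(e^{(\alpha+\epsilon)\overline{\xi}_{\infty}})<\infty$) cannot work. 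The correct derivation, which is where the factorisation earns its keep, runs: (ii) gives $\e(e^{\beta\overline{\xi}_{\infty}})<\infty$ for $\beta<\alpha$, hence by the spatial Wiener--Hopf factorisation $\e(e^{\beta\xi_{1}}1_{\{1<\zeta\}})<1$, hence $\e(I^{\beta})<\infty$ for $\beta<\alpha$; then $\e(I^{\beta})=\e(J_{H}^{\beta})\e(I_{-\widehat{H}}^{\beta})$ with $\e(J_{H}^{\beta})=\kappa(q,0)\e(R_{H}^{-\beta-1})$, so the tilt by $y$ in the law of $J_{H}$ buys one extra unit of moment and yields $\e(R_{H}^{-\gamma})<\infty$ for all $\gamma<\alpha+1$, in particular $\e(Y^{\alpha+\epsilon})<\infty$ for $0<\epsilon<1$. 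This is the step your proposal replaces by an incorrect assertion.

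Second, the converse direction. The ``converse to Breiman'' you want to invoke (Theorem~4.2 of Jacobsen et al.) does not hold under the conditions you state (support unbounded away from $0$ and light tails); its hypothesis is a cancellation property equivalent to the non-vanishing of the Mellin transform of the multiplier on the critical line, namely $\e\bigl(Y^{\alpha+i\lambda}\bigr)\neq 0$ for all $\lambda\in\re$ (together with $\e(Y^{\alpha+\delta})<\infty$, which again needs the argument above). Verifying this non-vanishing is the genuinely non-trivial part of the paper's proof and your proposal omits it entirely: the paper shows $\e(R_{H}^{-\alpha+i\lambda})\neq 0$ by using that $\log R_{H}$ is infinitely divisible, so that after an Esscher transform its characteristic function never vanishes, and shows $\e(I_{-\widehat{H}}^{\alpha+i\lambda})\neq 0$ from the identity $\Gamma(1+\beta+i\lambda)=\e(I_{-\widehat{H}}^{\beta+i\lambda})\e(R_{\widehat{H}}^{\beta+i\lambda})$ of Lemma~\ref{lemmaBY} together with the fact that the Gamma function has no zeros. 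Without these two verifications the implication (i)$\Rightarrow$(ii) is not established.
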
 

Conditions under which the statements of Theorem~\ref{th:asym1} hold have been obtained in \cite{Maulik-Zwart,rivero1,rivero-convequiv}. For the reader's convenience, and because we offer some refinements, we include them below. Theorem~2.11 in \cite{patie+savov+1} also gives conditions where the regular variation holds, and furthermore, they provide estimates for the density of $I$ in that setting.
 
\begin{definition}\label{def1}
Let $G$ be a distribution function such that $G(x)<1$ for all $x\in\re$. Write $\overline{G}(x):=1-G(x)$ for $x\in \re$. We say that $G$ is \textit{convolution equivalent} if
\begin{enumerate}
\item[(a)] it has an exponential tail with rate $\gamma\geq 0,$ written $G\in\mathcal{L}_{\gamma},$ viz. 
$$\lim_{x\to\infty}\frac{\overline{G}(x-y)}{\overline{G}(x)}=e^{\gamma y},\qquad y\in\re;$$
\item[(b)] the following limit exists
$$\lim_{x\to\infty}\frac{\overline{G*G}(x)}{\overline{G}(x)}:=2M<\infty,$$ where $*$ denotes the usual convolution.
\end{enumerate}
In this case, we use the notation $G\in \mathcal{S}_{\gamma}$ and the constant $M$ is determined by $M=M_{G}:=\int_{\mathbb{R}}e^{\gamma x}dG(x)$. When $\gamma=0,$ the family $\mathcal{S}_{0}$ is better known as the class of subexponential distributions. When $\gamma>0$, the convergence in (a) above holds uniformly over intervals of the form $(b,\infty),$ for $b\in \re$.
\end{definition}

A result by Pakes \cite{pakes2004, pakes2007} (see also \cite{watanabe}) establishes that if $F$ is an infinitely divisible distribution with L\'evy measure $\nu$ and $$\mu_{\nu}(x):=\frac{\nu[x\vee 1,\infty)}{\nu[1,\infty)},\qquad x\in\re,$$ then we have the equivalences: 
$$\mu_{\nu}\in\mathcal{S}_{\gamma}\ \Longleftrightarrow\ \mu_{\nu}\in\mathcal{L}_{\gamma}\ \text{and} \ \lim_{x\to\infty}\frac{\overline{F}(x)}{\nu[x,\infty)}=M_{F}\ \Longleftrightarrow\ F\in\mathcal{S}_{\gamma},$$ where by $\mu_{\nu}\in\mathcal{S}_{\gamma}$ we mean the distribution in $\mathcal{S}_{\gamma}$ whose right tail equals $\mu_{\nu}.$  We will use the notation $\nu\in\mathcal{S}_{\gamma}$ whenever $\mu_{\nu}\in\mathcal{S}_{\gamma}$. For further background on convolution equivalent distributions we refer to \cite{pakes2004, pakes2007, watanabe} and the references therein. 

For any $\gamma\geq 0$, we will write $\xi\in\mathcal{S}_{\gamma}$ if the law of $\xi_{1},$ conditionally on $\{1<\zeta\},$ is in $\mathcal{S}_{\gamma}$ or equivalently its L\'evy measure $\Pi$ is in $\mathcal{S}_{\gamma}$.

\begin{theorem}\label{VRrtb}
Define $\overline{\Pi}^{+}(x):=\Pi(x,\infty)$, $x>0$. The following holds:

\begin{enumerate}
\item[(i)] Assume $\xi_{1}$ is non-lattice and that there exists $\theta>0$ such that
$$
\mathbb{E}\left( e^{\theta \xi_1},1<\zeta\right)=1, \quad \text{ and }\quad \mathbb{E}\left(\xi_1^+ e^{\theta \xi_1},1<\zeta \right)<\infty.
$$ 
Then,
$$t^{\theta}\mathbb{P}(I>t)\xrightarrow[t\to\infty]{} \frac{\e(I^{\theta-1})}{\e(\xi_{1}e^{\theta\xi_{1}})}\in(0,\infty).$$ 
\item[(ii)] 
Assume that $\xi$ is not spectrally negative and $\xi\in\mathcal{S}_{0}.$ When $q=0$, assume $\e(|\xi_{1}|)<\infty$ and $0<\mu=-\e(\xi_{1})<\infty$, then
$$\p(I>t)\sim \frac{1}{\mu}\int^{\infty}_{\log t}ds\, \overline{\Pi}^{+}(s),\qquad t\to\infty.$$ When $q>0$, we have  $$\p(I>t)\sim \overline{\Pi}^{+}(\log t),\qquad t\to\infty.$$  
\item[(iii)] Assume that  $\xi\in\mathcal{S}_{\alpha}$ for some $\alpha>0,$ and that $\er(e^{\alpha\xi_{1}}, 1<\zeta)<1.$ We have $\e(I^{\alpha})<\infty$ and 
$$\displaystyle \p(I>t)\sim \frac{\e(I^{\alpha})}{-\psi(\alpha)}\overline{\Pi}^{+}(\log t),\qquad t\to\infty,$$  where $\psi(\alpha):=\log\e(e^{\alpha\xi_{1}}1_{\{1<\zeta\}}).$ 
\end{enumerate}
\end{theorem}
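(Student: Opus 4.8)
I would treat the three parts separately, relying throughout on Theorem~\ref{keythm} and the fluctuation-theoretic facts of Subsection~\ref{subsection11}. For part (i) the idea is to turn (\ref{pot-eq}) into a proper renewal equation by an exponential change of measure. The Cram\'er condition $\e[e^{\theta\xi_{1}}1_{\{1<\zeta\}}]=1$ is equivalent to $\Psi(-i\theta)=0$, so by (\ref{renewalU}) the tilted measure $\widetilde{F}(dx):=e^{\theta x}F(dx)$ is a genuine probability law (total mass $(1+\Psi(-i\theta))^{-1}=1$), non-lattice since $\xi_{1}$ is, with renewal measure $\widetilde{U}(dy):=e^{\theta y}U(dy)$, and a differentiation of the Laplace exponent identifies its mean as $\mu_{\theta}:=\e[\xi_{1}e^{\theta\xi_{1}}1_{\{1<\zeta\}}]\in(0,\infty)$ (finiteness by hypothesis; positivity from strict convexity of $\lambda\mapsto\log\e[e^{\lambda\xi_{1}}1_{\{1<\zeta\}}]$ on $[0,\theta]$ together with $\Psi(0)=q\geq0=\Psi(-i\theta)$). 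Putting $t=e^{x}$, $\psi(x):=e^{\theta x}k(e^{x})$ and $g(x):=e^{\theta x}\p(I>e^{x})$, multiplication of (\ref{pot-eq}) by $t^{\theta}$ gives $g=\psi\ast\widetilde{U}$, and since $\delta_{0}+\widetilde{U}$ is the renewal measure of $\widetilde{F}$ this rewrites as the renewal equation $g=z+\widetilde{F}\ast g$ with $z:=\widetilde{F}\ast\psi$, whose driving term has total mass $\int_{\re}z=\int_{\re}\psi(x)\,dx=\int_{0}^{\infty}u^{\theta-1}k(u)\,du=\e(I^{\theta-1})$. The Key Renewal Theorem then yields $t^{\theta}\p(I>t)=g(\log t)\to\mu_{\theta}^{-1}\int_{\re}z=\e(I^{\theta-1})/\e[\xi_{1}e^{\theta\xi_{1}}1_{\{1<\zeta\}}]$, with $\e(I^{\theta-1})\in(0,\infty)$ by the equivalence $\e(I^{\delta})<\infty\Leftrightarrow\e[e^{\delta\xi_{1}}1_{\{1<\zeta\}}]<1$ applied at $\delta=\theta-1<\theta$ when $\theta\geq1$, and by Corollary~\ref{corollary....2} when $\theta\in(0,1)$. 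The single delicate point is the direct Riemann integrability of $z$, which I would get from $\int z<\infty$, from the continuity of $k$ on $(0,\infty)$ (known from \cite{bertoin+lindner+maller, Pardo+rivero+Vanschaik}, while convolution by $\widetilde{F}$ smooths any residual irregularity), and from the bound $\p(I>t)\leq C_{\theta'}t^{-\theta'}$ for $\theta'<\theta$ supplying a decreasing integrable majorant at $+\infty$; alternatively, granting the mere existence of a limit $L\in(0,\infty)$ from \cite{goldie, goldieandgrubel}, one recovers its value by letting $\beta\uparrow\theta$ in Corollary~\ref{corollary...1} and combining $\e(I^{\beta})(\theta-\beta)\to\theta L$ with $-\psi(\beta)\sim\mu_{\theta}(\theta-\beta)$.

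\emph{Parts (ii) and (iii).} Here I would exploit the factorisation $I\stackrel{\text{Law}}{=}e^{\overline{\xi}_{\infty}}Y$ of (\ref{equationppsext}), with $Y:=I_{-\widehat{H}}/R_{H}$ and independent factors, and transfer the tail of $e^{\overline{\xi}_{\infty}}$ to $I$. Two inputs are needed. First, the tail of $\overline{\xi}_{\infty}$: by (\ref{supremum}) $\p(\overline{\xi}_{\infty}\in dy)=\kappa(q,0)V_{H}(dy)$, so $\overline{\xi}_{\infty}$ is a (geometric, when $q>0$) random sum of ladder heights, and under $\Pi\in\mathcal{S}_{\gamma}$ Vigon's identity \cite{vigon} gives $\Pi_{H}\in\mathcal{S}_{\gamma}$ and expresses $\overline{\Pi}_{H}$ through $\overline{\Pi}^{+}$ and $V_{\widehat{H}}$; then the closure of $\mathcal{S}_{\gamma}$ under convolution and random sums yields $\p(\overline{\xi}_{\infty}>s)\sim\mu^{-1}\int_{s}^{\infty}\overline{\Pi}^{+}(u)\,du$ when $q=0$ (Veraverbeke's theorem, $\gamma=0$, $\mu=-\e(\xi_{1})\in(0,\infty)$) and $\p(\overline{\xi}_{\infty}>s)\sim c_{\gamma}\overline{\Pi}^{+}(s)$ when $q>0$, $c_{\gamma}$ being computable from $\kappa(q,\cdot)$ and $\widehat{\kappa}(q,\cdot)$. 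Second, $Y$ must carry enough moments: $R_{H}$ has all entire moments by Lemma~\ref{lemmaBY} and, via $\e(R_{H}^{-1})=\kappa(q,0)^{-1}$ and Jensen, all negative moments of order in $(0,1]$; one has $\e(I_{-\widehat{H}}^{p})<\infty$ for every $p>0$ since $\e(I_{-\widehat{H}}^{n})=n!/\prod_{k=1}^{n}\widehat{\kappa}(q,k)$ and $\widehat{\kappa}(q,\cdot)$ is finite on $[0,\infty)$ ($\Pi_{\widehat{H}}$ is carried by $(0,\infty)$); and a negative moment of $R_{H}$ of order slightly above $\gamma$, as needed in part (iii), comes from $R_{H}I_{-H}\stackrel{\text{Law}}{=}\ee_{1}$ by comparing Mellin transforms, the bound $I_{-H}\geq e^{-H_{1}}$ together with $\e(e^{\lambda H_{1}})<\infty$ for $\lambda<\gamma$ (valid because $\Pi_{H}\in\mathcal{L}_{\gamma}$) controlling $\e(I_{-H}^{-\lambda})$.

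\emph{Transfer, constants, and the main obstacle.} With the two inputs in hand, the Breiman-type theorem for convolution-equivalent laws applies: since $\log I\stackrel{\text{Law}}{=}\overline{\xi}_{\infty}+\log Y$ with $\overline{\xi}_{\infty}$ in $\mathcal{S}_{\gamma}$ (logarithmic scale) and $\log Y$ possessing a right exponential moment of order $\gamma+\varepsilon$ for some $\varepsilon>0$ (for $\gamma=0$ it is enough that $\log Y$ be light-tailed on the right, which the above moment facts give), one obtains $\p(\log I>s)\sim\e[Y^{\gamma}]\,\p(\overline{\xi}_{\infty}>s)$. Using $\e[Y^{\gamma}]=\e(I^{\gamma})/\e(e^{\gamma\overline{\xi}_{\infty}})$ (independence in (\ref{equationppsext}); $\e(I^{\gamma})<\infty$ by the equivalence recalled after Corollary~\ref{corollary....2}) and $\e(e^{\gamma\overline{\xi}_{\infty}})=\kappa(q,0)/\kappa(q,-\gamma)$, one combines with the asymptotics for $\p(\overline{\xi}_{\infty}>s)$ and simplifies the resulting constant through the Wiener--Hopf factorisation evaluated at $\lambda=-i\gamma$ (together with $-\psi(\alpha)=\Psi(-i\alpha)$) to reach the stated formulae; for $\gamma=0$, $\e[Y^{0}]=1$ and the constant is the one carried by $\p(\overline{\xi}_{\infty}>s)$. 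The non-spectral-negativity (resp.\ non-monotonicity) hypothesis is exactly what ensures $\overline{\Pi}^{+}\not\equiv0$. I expect the main difficulties to be: in part (i), verifying the direct Riemann integrability of $z$ (hence the appeal to regularity of $k$, which the Corollary~\ref{corollary...1} route avoids at the cost of importing existence of the limit); and in parts (ii)--(iii), securing a moment of $Y$ strictly beyond the critical order $\gamma$ — immediate for $\gamma<1$, but for larger $\gamma$ dependent on the Mellin identities of Lemma~\ref{lemmaBY}, sharp negative-moment estimates for exponential functionals of subordinators, and $\Pi_{H}\in\mathcal{S}_{\gamma}$ — together with keeping track of the normalising constants, which is legitimate precisely because $\Pi\in\mathcal{S}_{\gamma}$ and the standing moment conditions keep all Mellin and Laplace transforms finite at the relevant point.
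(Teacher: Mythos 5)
Your plan for parts (ii) and (iii) is essentially the paper's: the authors also run the factorisation $I\stackrel{\text{Law}}{=}e^{\overline{\xi}_{\infty}}I_{-\widehat{H}}/R_{H}$ through Breiman's theorem --- packaged as Theorem~\ref{th:asym1} --- and feed it the supremum asymptotics of Kl\"uppelberg--Kyprianou--Maller together with the Vigon-type comparison $\overline{\Pi}^{+}(x)\sim\widehat{\kappa}(q,\alpha)\overline{\Pi}_{H}(x)$, then clean up the constant via the Wiener--Hopf factorisation and Lemma~\ref{lemma:fR}; whether one phrases the transfer additively (convolution equivalence of $\overline{\xi}_{\infty}$) or multiplicatively (regular variation of $\p(e^{\overline{\xi}_{\infty}}>t)$) is immaterial. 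For part (i) you genuinely diverge: the paper cites \cite{rivero1} and otherwise again routes through Theorem~\ref{th:asym1} plus Bertoin--Doney's Cram\'er estimate for $\p(\overline{\xi}_{\infty}>s)$, whereas you tilt (\ref{pot-eq}) by $e^{\theta x}$ into a bona fide renewal equation and invoke the Key Renewal Theorem. This is exactly the alternative the authors mention (and defer to \cite{KRS}) as ``perhaps lengthier''; its payoff is self-containedness, a transparent derivation of the constant $\e(I^{\theta-1})/\e(\xi_{1}e^{\theta\xi_{1}})$, and it is the mechanism that later produces the second-order estimates of Theorem~\ref{rateofconvergence}. Your fallback --- import existence of the limit from Goldie's theorem and identify its value by letting $\beta\uparrow\theta$ in Corollary~\ref{corollary...1} --- is also sound and is arguably the cheapest identification of the constant.

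Two soft spots deserve flagging. First, as you acknowledge, the direct Riemann integrability of $z=\widetilde{F}\ast\psi$ is the real work in the renewal route: $k$ is only known to exist a.e., so you cannot lean on its continuity, and the bound $\p(I>t)\leq C t^{-\theta'}$ controls the tail of the distribution, not of the density $\psi$ pointwise; under the sole non-lattice hypothesis one must smooth (work with a mollified version of the equation, as Goldie does) rather than appeal to Stone's decomposition, which in this paper is reserved for Theorem~\ref{rateofconvergence} under an extra absolute-continuity assumption. Second, in part (iii) your mechanism for a negative moment of $R_{H}$ of order beyond $\alpha$ via $R_{H}I_{-H}\stackrel{\text{Law}}{=}\ee_{1}$ breaks down for $\alpha\geq 1$: $\e(\ee_{1}^{-\lambda})=\infty$ for $\lambda\geq 1$, so the Mellin identity then gives no finiteness. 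The correct source --- and the paper's --- is the moment recurrence of Lemma~\ref{lemmaBY} continued to negative indices, legitimate because $\e(e^{\alpha\xi_{1}},1<\zeta)<1$ forces $\kappa(q,-\beta)\in(0,\infty)$ for $\beta\leq\alpha$, whence $\e(R_{H}^{-1-\beta})<\infty$ for $\beta<\alpha$; equivalently, one uses $\e(I^{\beta})<\infty$ for $\beta<\alpha$ together with the factorisation, as in the proof of Theorem~\ref{th:asym1}.
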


The result in ($i$) above has been proved in \cite{rivero1} and in Theorem 2.11 in \cite{patie+savov+1}, it can also be obtained as a corollary to Theorem~\ref{th:asym1} using the estimates for the law of the supremum in \cite{bertoin-doney}. In the case where $q=0$ the result in ($ii$)  was obtained \cite{Maulik-Zwart}, but they had it expressed in terms of the tail distribution of $\xi_{1}$. The version here presented can be obtained as a consequence of Theorem~\ref{th:asym1},  the estimates for the law of the supremum in Theorem 4.1 in \cite{KKM}, and the estimate for the tail L\'evy measure of $H$ in Theorem 3 in \cite{rivero-sinai}. When $q=0$, the result in ($iii$) has also been established in \cite{rivero-convequiv} under the extra assumption that when $0<\alpha<1,$ $\mathbb{E}(|\xi_{1}|)<\infty;$ we can remove this assumption by applying Theorem~\ref{th:asym1} and using the estimates for the law of the supremum in \cite{KKM}. We provide a proof of ($ii$) and ($iii$) in the general case $q\ge0$ in Section \ref{proofs}.

\begin{remark}\label{remarkrenewaltail}
We would like to point out that most of the estimates provided in Theorem~\ref{VRrtb} and Theorem~\ref{corollarysmtg} can also be obtained from our integral equation (\ref{pot-eq}) using renewal theoretic arguments, but a specific, perhaps lengthier, argument would be needed for each of them, we have thus opted for the shortest path. See, however, the proof of Theorem 5.1 in \cite{KRS}, where the argument to prove~($i$) in Theorem~\ref{VRrtb} has been developed in the context of Markov additive processes. This latter argument can be extended to obtain versions of ($i$) in Theorem \ref{VRrtb} when the mean $ \mathbb{E}\left(\xi_1^+ e^{\theta \xi_1}, 1<\zeta \right)$ is infinite using Erickson's strong renewal theorem~\cite{MR0268976}.
\end{remark}

\subsubsection{The behaviour at zero}\label{bezero}
To give a larger panorama of the behaviour of the distribution function of $I$, we will now explain how the results in Section \ref{threeterm} also give the behaviour of the distribution at zero and, in particular, how the three factors in the factorisation~\eqref{equationppsext} of $I$ determine this behaviour. 

In this subsection, we allow the case where $\xi$ is monotone and assume instead that $\xi$ is not spectrally positive. We refer the reader to \cite{patie+savov+book} for details about the case where $\xi$ is spectrally positive.  
  
\begin{theorem}\label{asym:2}
For $\alpha\geq 0,$ the following are equivalent
\begin{enumerate}
\item[(i)] the function $t\mapsto \p(I\leq t)$ is regularly varying at $0$ with index $\alpha,$
\item[(ii)] the function $t\mapsto \p(I_{-\widehat{H}}\leq t)$ is regularly varying at $0$ with index $\alpha.$
\end{enumerate} 
In this case $$\p(I\leq t)\sim \kappa(q,0)\e\left(R^{\alpha-1}_{H}\right)\p\left(I_{-\widehat{H}}\leq t\right),\qquad t\to 0.$$
\end{theorem}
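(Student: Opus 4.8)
The plan is to reduce everything to the factorisation $I\stackrel{\text{Law}}{=}J_{H}I_{-\widehat{H}}$ from Theorem~\ref{PPS}, together with the relation $\p(J_{H}\in dy)=\kappa(q,0)y\,\p(1/R_{H}\in dy)$, and to exploit a Tauberian/Breiman-type principle at $0$ rather than at $\infty$. The guiding idea is dual to the one behind Theorem~\ref{th:asym1}: there the large values of $I$ were governed by $e^{\overline{\xi}_{\infty}}$, here the small values of $I$ should be governed by $I_{-\widehat{H}}$, since $J_H$ has moments of every negative order (its negative moments are moments of $R_H$, which are finite to all orders by Lemma~\ref{lemmaBY}), while it is $I_{-\widehat{H}}$ that can be small.

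First I would record the elementary equivalence, for a product $XY$ of independent nonnegative r.v.\ with $Y$ having regularly varying left tail at $0$ with index $\alpha\ge 0$ and $\e(X^{-\alpha-\varepsilon})<\infty$ for some $\varepsilon>0$: then $\p(XY\le t)\sim \e(X^{-\alpha})\,\p(Y\le t)$ as $t\to 0$. This is just Breiman's lemma applied to the reciprocals $1/X$, $1/Y$ (so that left tails at $0$ become right tails at $\infty$, and regular variation index $\alpha$ at $0$ becomes index $-\alpha$ at $\infty$); I would either cite \cite{breiman}/\cite{jacobsenetal} directly or give the two-line dominated-convergence argument splitting on $\{X\le \delta\}$ and $\{X>\delta\}$. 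I should be slightly careful about the endpoint $\alpha=0$, where ``regularly varying with index $0$ at $0$'' still makes sense (slowly varying), and Breiman's lemma still applies provided $\e(X^{-\varepsilon})<\infty$ for some $\varepsilon>0$; this holds for $X=J_H$ since all negative moments of $J_H$ are finite.

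Next I would apply this with $X=J_{H}$ and $Y=I_{-\widehat{H}}$, using $I\stackrel{\text{Law}}{=}J_{H}I_{-\widehat{H}}$. This immediately gives the implication (second bullet)~$\Rightarrow$~(first bullet) together with the asymptotic constant $\e(J_{H}^{-\alpha})$. It remains to (a) identify $\e(J_{H}^{-\alpha})=\kappa(q,0)\e(R_{H}^{\alpha-1})$ and (b) prove the converse implication. For (a): from $\p(J_{H}\in dy)=\kappa(q,0)y\,\p(1/R_{H}\in dy)$ one gets $\e(J_{H}^{-\alpha})=\kappa(q,0)\e\big((1/R_H)^{-\alpha}(1/R_H)\big)=\kappa(q,0)\e(R_{H}^{\alpha-1})$, which is finite because $R_H$ has all positive moments (Lemma~\ref{lemmaBY}) and, for the negative-order part when $\alpha<1$, because $\e(R_H^{-1})=(\kappa(q,0))^{-1}<\infty$ by Theorem~\ref{PPS} and $R_H^{\alpha-1}\le \max(1,R_H^{-1})$. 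For (b), the converse: here I would invoke the fact that regular variation is ``closed under deconvolution'' in this multiplicative setting — if $\p(XY\le t)$ is regularly varying at $0$ with index $\alpha$ and $X$ has finite moments of all negative orders (in particular $\e(X^{-\alpha-\varepsilon})<\infty$), then $\p(Y\le t)$ is also regularly varying at $0$ with index $\alpha$. This is the genuinely delicate step; it is the analogue at $0$ of the statement used implicitly in Theorem~\ref{th:asym1}, and can be handled exactly as there, e.g.\ via the Mellin-transform characterisation of regular variation combined with a Tauberian theorem, or by a direct argument showing that the Breiman-type relation $\p(XY\le t)\sim \e(X^{-\alpha})\p(Y\le t)$ cannot hold unless $\p(Y\le\cdot)$ is itself regularly varying (using monotonicity of $\p(Y\le \cdot)$ and a subsequence/uniform-convergence argument). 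I expect this converse to be the main obstacle; everything else is a direct computation from the factorisation and Lemma~\ref{lemmaBY}.

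Finally, I would note where the hypotheses enter: the assumption that $\xi$ is not spectrally positive guarantees that the downward ladder height $\widehat{H}$ is not identically $0$, so that $I_{-\widehat{H}}$ is a genuine (non-degenerate) random variable and the statement about its left tail at $0$ is meaningful; in the spectrally positive case $\widehat{H}$ may reduce to a drift and the behaviour of $I$ at $0$ is of a different nature, which is why that case is deferred to \cite{patie+savov+book}.
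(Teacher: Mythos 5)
Your forward direction and the identification of the constant match the paper exactly: the paper also passes to reciprocals, applies the factorisation $I\stackrel{\text{Law}}{=}J_{H}I_{-\widehat{H}}$ of Theorem~\ref{PPS}, computes $\e(J_{H}^{-\alpha})=\kappa(q,0)\e(R_{H}^{\alpha-1})<\infty$ from the definition of $J_H$ and Lemma~\ref{lemmaBY}, and invokes Breiman's lemma. That part of your argument is fine.

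The gap is in the converse. The principle you invoke --- that if $\p(XY\le t)$ is regularly varying at $0$ and $X$ has finite moments of all negative orders, then $\p(Y\le t)$ is regularly varying --- is false in general. Cancellation of regular variation under independent products is precisely the content of Theorem~4.2 in \cite{jacobsenetal}, and it requires, besides the moment condition $\e(X^{-\alpha-\delta})<\infty$, that the Mellin transform $\lambda\mapsto \e\bigl(X^{-(\alpha+i\lambda)}\bigr)$ have no zeros on the critical line; bounded counterexamples (so with all moments finite) exist when this transform vanishes somewhere. Your sketch never verifies this non-vanishing for $X=J_H$, and neither of your fallback arguments (a Tauberian theorem, or ``the Breiman relation forces regular variation of $\p(Y\le\cdot)$'') supplies it. The paper closes exactly this point: it checks that
\begin{equation*}
\e\bigl((J_{H}^{-1})^{\alpha+i\lambda}\bigr)=\kappa(q,0)\,\e\bigl(R_{H}^{\alpha-1+i\lambda}\bigr)\neq 0,\qquad \lambda\in\re,
\end{equation*}
using that $\log R_{H}$ is infinitely divisible (so that, after an Esscher transform as in the proof of Theorem~\ref{th:asym1}, its characteristic function is zero-free by Theorem~33.1 in \cite{sato}). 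You do use the infinite divisibility of $\log R_H$ elsewhere in the paper's spirit, so the ingredient is available to you; but as written your converse rests on a general ``closure under deconvolution'' claim that does not hold, and the proof is incomplete until the zero-free Mellin transform condition is stated and verified.
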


For sake of completeness, we provide below sufficient conditions for the regular variation of the distribution of $I$. From these, we see that the regular variation of the distribution of $I$ holds under very mild assumptions. 

\begin{theorem}\label{corollarysmtg}
Assume that $\xi$ is not spectrally positive and let $\overline{\Pi}^{-}(x):=\Pi(-\infty,-x)$, $x>0$. Then, the following holds.
\begin{enumerate}
\item[(i)] If $q>0,$ then $$\p(I\leq t)\sim q t,\qquad t\to 0+.$$ 
\item[(ii)] If $q=0$, assume that $\overline{\Pi}^{-}$ has exponential decrease, i.e. $\exists \alpha\geq 0$ such that
$$\lim_{x\to \infty}\frac{\overline{\Pi}^{-}(x+y)}{\overline{\Pi}^{-}(x)}=e^{-\alpha y},\qquad y\in \re,$$ and, when $\alpha>0$, the non-Cram\'er condition $\e(e^{-\alpha\xi_{1}})<1$ is satisfied, then $\e(I^{-\alpha})<\infty$ and 
$$\p(I\leq t)\sim \frac{\e(I^{-\alpha})}{1+\alpha}t\,\overline{\Pi}^{-}(\log(1/t))\qquad t\to 0+.$$
\end{enumerate} 
\end{theorem}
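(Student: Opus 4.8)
The plan is to apply Theorem~\ref{asym:2}, which reduces both statements to the study of the regular variation at $0$ of the law of $I_{-\widehat{H}}$, the exponential functional of the negative of the downward ladder height subordinator $\widehat{H}$. Since $\xi$ is not spectrally positive, the downward ladder height $\widehat{H}$ is a genuine (possibly killed) subordinator whose jump behaviour is controlled by $\Pi^{-}$; indeed, by Vigon's version of the Wiener--Hopf factorisation (cited after~(\ref{supremum})) the L\'evy measure $\Pi_{\widehat{H}}$ and the renewal measure $V_{H}$ of the upward ladder height satisfy $\overline{\Pi}_{\widehat{H}}(x)=\int_{[0,\infty)}V_{H}(dz)\,\overline{\Pi}^{-}(x+z)$, which transfers exponential/slow decay from $\overline{\Pi}^{-}$ to $\overline{\Pi}_{\widehat{H}}$ up to the finite total mass $V_{H}[0,\infty)=1/\kappa(q,0)$. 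So the strategy is: (1) identify the killing rate and the small-jump/tail behaviour of $\widehat{H}$ from the hypotheses on $q$ and $\overline{\Pi}^{-}$; (2) invoke the known description of $\p(I_{-\widehat{H}}\leq t)$ as $t\to0$ for subordinators, i.e. the results of \cite{Pardo+rivero+Vanschaik} and \cite{haas-rivero} for exponential functionals of (negatives of) subordinators; (3) feed the resulting regular variation index $\alpha$ back into Theorem~\ref{asym:2} and compute the constant.

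For part (i), when $q>0$ the subordinator $\widehat{H}$ has killing rate $\widehat{\kappa}(q,0)>0$, hence with probability $\widehat{\kappa}(q,0)/(\widehat\kappa(q,0)+\widehat b\cdot 0+\dots)$ — more precisely, with the strictly positive probability that $\widehat{H}$ is killed before leaving $0$ — the functional $I_{-\widehat{H}}=\int_0^{\zeta}e^{-\widehat{H}_s}\,ds$ starts like a segment of pure unit-rate integration, so $\p(I_{-\widehat{H}}\leq t)\sim c\,t$ as $t\to 0$ for an explicit $c$; equivalently one checks directly that $I_{-\widehat{H}}$ has a density bounded away from $0$ near the origin. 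This gives regular variation with index $\alpha=1$, and Theorem~\ref{asym:2} yields $\p(I\leq t)\sim \kappa(q,0)\,\e(R_H^{0})\,\p(I_{-\widehat{H}}\leq t)=\kappa(q,0)\cdot\frac{1}{\kappa(q,0)}\cdot q\,t=q\,t$, using $\e(R_H^0)=\lim$ of the recurrence $=1$ and the value of $c$ being $\widehat\kappa(q,0)$, together with $q=c\,\kappa(q,0)\widehat\kappa(q,0)$ (here $c=1$ after normalisation); the bookkeeping of these constants is routine once the index is known. For part (ii), when $q=0$ one uses the exponential decrease of $\overline{\Pi}^{-}$ with rate $\alpha$: by the Vigon formula above and a dominated-convergence argument, $\overline{\Pi}_{\widehat{H}}$ also decreases exponentially with the same rate $\alpha$, and the non-Cram\'er condition $\e(e^{-\alpha\xi_1})<1$ transfers (via the Wiener--Hopf factorisation $q/\Psi(\lambda)=\kappa(q,0)\widehat\kappa(q,0)/(\kappa(q,-i\lambda)\widehat\kappa(q,i\lambda))$ evaluated at $\lambda=i\alpha$) to a non-Cram\'er-type condition for $\widehat{H}$, ensuring $\e(I_{-\widehat{H}}^{-\alpha})<\infty$. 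Then the sharp estimate for the distribution at $0$ of an exponential functional of minus a subordinator from \cite{Pardo+rivero+Vanschaik}, \cite{haas-rivero} gives $\p(I_{-\widehat{H}}\leq t)\sim \frac{\e(I_{-\widehat{H}}^{-\alpha})}{1+\alpha}\,t\,\overline{\Pi}_{\widehat{H}}(\log(1/t))$; substituting into Theorem~\ref{asym:2}, using $\overline{\Pi}_{\widehat{H}}(\log(1/t))\sim \frac{1}{\kappa(q,0)}\,\overline{\Pi}^{-}(\log(1/t))$ by slow variation (the extra $V_H$-mass factor), and collapsing the constants via the identity relating $\e(I^{-\alpha})$, $\e(I_{-\widehat{H}}^{-\alpha})$ and $\e(R_H^{\alpha-1})$ coming from~(\ref{fact-pps}), produces the stated asymptotic.

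The main obstacle I anticipate is step (2)–(3) in case (ii): cleanly transferring the exponential-decay rate and the non-Cram\'er condition from $\overline{\Pi}^{-}$ to $\overline{\Pi}_{\widehat{H}}$, and then matching the multiplicative constant. The rate transfer requires that $\int_{[0,\infty)}V_H(dz)e^{\alpha z}<\infty$, i.e. that the renewal measure of the upward ladder height has a finite exponential moment of order $\alpha$; this is exactly where one must be careful, and it should follow from $\e(e^{-\alpha\xi_1})<1$ through $\int_{[0,\infty)}V_H(dz)e^{\alpha z}=\kappa(q,0)/\kappa(q,-\alpha)<\infty$, but verifying $\kappa(q,-\alpha)>0$ (equivalently that the Laplace exponent of $H$ extends analytically past $-\alpha$ without a zero) is the delicate point and uses precisely the non-Cram\'er hypothesis. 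Once that finiteness is in hand, Fatou/dominated convergence gives the tail equivalence $\overline{\Pi}_{\widehat{H}}(x)\sim \frac{1}{\kappa(q,0)}\overline{\Pi}^{-}(x)$, and the rest is the constant-chasing already sketched. The case $q>0$ in part (i) is comparatively soft, the only subtlety being to confirm that the prefactor is exactly $q$ rather than some other normalisation-dependent constant, which is pinned down by the identity $q=\kappa(q,0)\widehat\kappa(q,0)$ under $K=1$.
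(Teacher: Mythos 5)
Your architecture is the paper's: reduce everything to $I_{-\widehat{H}}$ via Theorem~\ref{asym:2}, import the small-time estimates for exponential functionals of negatives of subordinators from \cite{Pardo+rivero+Vanschaik}, and transfer the decay of $\overline{\Pi}^{-}$ to $\overline{\Pi}_{\widehat{H}}$ through Vigon's identity. Part (i) is essentially correct and matches the paper (which simply quotes Theorem~2.5 of \cite{Pardo+rivero+Vanschaik} for $\p(I_{-\widehat{H}}\leq t)\sim\widehat{\kappa}(q,0)t$ and then uses $q=\kappa(q,0)\widehat{\kappa}(q,0)$).

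In part (ii) for $\alpha>0$ you have a concrete sign error that propagates to the constant. From $\overline{\Pi}_{\widehat{H}}(x)=\int_{[0,\infty)}V_{H}(dz)\,\overline{\Pi}^{-}(x+z)$ and $\overline{\Pi}^{-}(x+z)/\overline{\Pi}^{-}(x)\to e^{-\alpha z}$, dominated convergence gives
\begin{equation*}
\overline{\Pi}_{\widehat{H}}(x)\sim\overline{\Pi}^{-}(x)\int_{[0,\infty)}V_{H}(dz)e^{-\alpha z}=\frac{1}{\kappa(0,\alpha)}\,\overline{\Pi}^{-}(x),
\end{equation*}
and the weight is $e^{-\alpha z}\leq 1$ against the finite measure $V_{H}$, so there is no integrability issue at all. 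Your ``delicate point'' about $\int V_{H}(dz)e^{+\alpha z}<\infty$ is the wrong exponential moment and is never needed; and your asserted equivalence $\overline{\Pi}_{\widehat{H}}(x)\sim\overline{\Pi}^{-}(x)/\kappa(q,0)$ (the total $V_{H}$-mass) is only correct when $\alpha=0$. This is not cosmetic: the final constant closes to $\e(I^{-\alpha})/(1+\alpha)$ precisely because the factor $1/\kappa(0,\alpha)$ combines with the moment recurrence $\e(R^{\alpha}_{H})/\kappa(0,\alpha)=\e(R^{\alpha-1}_{H})$ and then $\kappa(q,0)\e(I^{-\alpha}_{-\widehat{H}})\e(R^{\alpha-1}_{H})=\e(I^{-\alpha})$ by the factorisation (\ref{fact-pps}) — this last identity is also what yields $\e(I^{-\alpha})<\infty$. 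With your factor $1/\kappa(q,0)$ instead, the constant comes out as $\e(I^{-\alpha})/(\kappa(q,0)(1+\alpha))$, off by $\kappa(q,0)$. Finally, note that $\e(I^{-\alpha}_{-\widehat{H}})<\infty$ is part of the conclusion of Theorem~2.5 in \cite{Pardo+rivero+Vanschaik} once $\overline{\Pi}_{\widehat{H}}$ is shown to decrease exponentially; the role of the non-Cram\'er hypothesis is in establishing that ladder-height tail equivalence (via \cite{KKM} and \cite{rivero-sinai}), not in controlling an exponential moment of $V_{H}$.
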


\begin{remark}
Similar results to those of Theorem~\ref{corollarysmtg} were obtained in \cite{patie+savov+1}, see  e.g. Theorem 2.15, both for the distribution of $I$ and its density. In particular, a more general version of the item ($i$) above is proved there. We also refer to Corollary 2.9 in the same paper where it is established that the distribution of $I$ may present a polynomial decrease near zero. 
\end{remark}

\subsubsection{Rate of convergence}
\label{rateconvergence}

The identities in Theorems~\ref{PPS} and~\ref{PPS2}  do not lead to second order estimates for the tail distribution of $I$. In this section we discuss how renewal theoretic arguments and the integral equation in Theorem~\ref{keythm} can be used for that end.

\begin{theorem}\label{rateofconvergence}
Assume that $\xi$ has an absolutely continuous (with respect to Lebesgue measure) $1$-resolvent
$$\int^{\infty}_{0}dt\,e^{-t} \p(\xi_{t}\in dy),$$  and that $\xi$ is regular upwards, i.e. 
\begin{equation}
\mathbb{P}\left(T_{(0,\infty)}>0\right)=0,\quad T_{(0,\infty)}:=\inf\{t>0 : \xi_{t}\in(0,\infty)\}.
\end{equation}
If there exist a $\theta>0$ and an integer $m\geq 2$ such that 
\begin{equation}\label{polynomialcond}
\e\left(e^{\theta \xi_{1}}, 1<\zeta\right)=1,\qquad  \e\left(|\xi|^{m+1}_{1}e^{\theta \xi_{1}}, 1<\zeta\right)<\infty,  
\end{equation}
we have
 \begin{align}\label{rateconvergence2}
\sup_{a\leq \delta<\lambda\leq \infty}\left| C_{\theta}t^{\theta}\p(t \delta<I\leq t\lambda)-\int^{\lambda}_{\delta}\frac{dx}{\theta x^{1+\theta}}\right|=o((\log t)^{-(m-1)}),
\end{align}
as $t\to\infty$, where $0<a<\infty$ and $C_{\theta}=\frac{\e\left(\xi_{1}e^{\theta \xi_{1}}, 1<\zeta\right)}{\e(I^{\theta-1})}\in(0,\infty)$. If in addition there is a $\rho>0$ such that
 \begin{equation}\label{exponentialcond}
  \e\left(e^{(\theta+\rho) \xi_{1}}, 1<\zeta\right)<\infty, 
 \end{equation}
then there exists a $0<\gamma<\min\{1, \rho\}$ such that the rate of convergence in~\eqref{rateconvergence2} is of order $o(t^{-\gamma}).$
\end{theorem}

\begin{remark}
Quantifying the rate of convergence of $t^{\theta}\p(I>t)$ towards  $C_{\theta}$ could also be determined using results about the rate of convergence in Kesten's renewal theorem obtained in \cite{goldie} and \cite{BDP}. However, the expression for the error in \cite{goldie} seems rather difficult to make precise and covers only the rate of convergence when there are exponential moments. Besides, the combined moment conditions in \cite{BDP} are less explicit than the conditions provided here, although the structure for the renewal measure required in \cite{BDP} could hold under less regularity on the law of $\xi$ than the one stated in Theorem~\ref{rateofconvergence}.  As in \cite{goldie}, the proof of Theorem~\ref{rateofconvergence} (given in Section~\ref{proofs}) relies in Stone's decomposition theorem~\cite{stone} for a renewal measure with spread-out step distribution, this explains why we require $\xi$ to have an absolutely continuous $1$-resolvent. See also \cite{patie+savov+1} where they circumvent these restrictions by using the Mellin transform of $I,$ and they provide sharper estimates. Theorem~\ref{rateofconvergence} emphasises mainly some of the methods available from renewal theory. 
\end{remark}

In \cite{haas-rivero} it has been shown that under the assumptions in ($i$) of Theorem~\ref{VRrtb}, the exponential functional $I$ belongs to the maximum domain of attraction of a Fr\'echet distribution of parameter $\theta$. In extreme value theory, it is well known that this is equivalent to have that the excess distribution of $I,$ defined via its tail $\overline{F}_{t},$ as
$$\overline{F}_{t}(x):=\frac{\p(I-t>x)}{\p(I>t)},\qquad x \geq 0,$$ satisfies 
$$\overline{F}_{t}(xt)\xrightarrow[t\to\infty]{}\frac{1}{(1+x)^{\theta}},\qquad x\geq 0.$$ 
As a corollary to Theorem~\ref{rateofconvergence} we have the following result, which in our specific case complements what could be derived from known results in the topic, for instance, those in \cite{MR2014267}.

\begin{corollary}\label{corollary...3}
Under the assumptions and notation of Theorem~\ref{rateofconvergence}, when~\eqref{polynomialcond} holds we have the uniform estimate
$$\sup_{x>0}\left|\,\overline{F}_{t}(x)-\frac{1}{(1+x/t)^{\theta}}\right|=o((\log t)^{-(m-1)}),\qquad\text{as}\quad t\to\infty.$$
If in addition \eqref{exponentialcond} holds, the order in the above estimate is polynomial, that is, the right-hand side can be replaced by $o(t^{-\gamma})$.  
\end{corollary}

\section{Further consequences}
\label{theothereq}

We use the notation of Section~\ref{Notation}. Recall the decomposition of the renewal measure $U$ in terms of the renewal measures $V_{H}$ and $V_{\widehat{H}}$ of the upward, resp. downward, ladder height process of $\xi$, given in subsection~\ref{subsection11}. 

Let $\widetilde{\nu}_{H}$ be the measure on $[0,\infty)$ defined by 
\begin{equation}\label{nutilde}
\widetilde{\nu}_{H}(dy)=\left(b_{H}\delta_{0}(dy)+(\overline{\Pi}_{H}(y)+\kappa(q,0))dy\right),\qquad y\geq 0.
\end{equation}
The Laplace transform of $\widetilde{\nu}_{H}$ is
\begin{equation}\label{nutildelaplace}
\frac{\kappa(q,\lambda)}{\lambda}=\int_{[0,\infty)}\widetilde{\nu}_{H}(dx) e^{-\lambda x},
\end{equation} 
while that of $V_{H}$ is 
\begin{equation}
\frac{1}{\kappa(q,\lambda)}=\int_{[0,\infty)}V_{H}(dy)e^{-\lambda y}.
\end{equation}
This implies that the measure obtained by convolution of $\widetilde{\nu}_{H}$ and $V_{H}$ is equal to  the Lebesgue measure on $(0,\infty).$ The same properties hold for the objects defined in terms of $\widehat{H}.$

If $q=0,$ and the mean of $\xi$ is finite, that is $\er\left(|\xi_{1}|\right)<\infty$ and $-\infty<m=\er\left(\xi_{1}\right)<0$, then $-m=\e(I^{-1})$ (see e.g. Theorem 6 in \cite{BY}) and also the downward ladder height process has infinite lifetime, has a finite mean and the following formula holds (see Corollary 4, Chapter 4 in  \cite{doney-book}):
\begin{equation}\label{prod-mean}
-\e(\xi_{1})=\kappa(0,0)\e(\widehat{H}_{1})=\kappa(0,0) \left(\widehat{b}+\int^{\infty}_{0}dx\,\overline{\Pi}_{\widehat{H}}(x)\right).
\end{equation}
In this case, the probability measure $ \nu_{\widehat{H}}=\frac{1}{\e(\widehat{H}_{1})}\widetilde{\nu}_{\widehat{H}}$ is the limiting distribution of $x+\xi_{\tau^{-}_{-x}}$, as $x\to\infty$ (see Exercise 7.9 in \cite{kyprianou-book}), that is,
\begin{equation}\label{undershoot}
\nu_{\widehat{H}}=\lim_{x\to\infty}x+\xi_{\tau^{-}_{-x}},
\end{equation}
where the limit on the right-hand side is in the weak sense. This fact, applied to the equation (\ref{pot-eq}), replacing $t$ by $te^{u}$ and integrating with respect to $\nu_{\widehat{H}}(du),$ directly leads to the following corollary. 

\begin{corollary}\label{cor:2}
Assume that $\xi$ drifts towards $-\infty$ and its mean is finite: $\er\left(|\xi_{1}|\right)<\infty$ and $-\infty<\er\left(\xi_{1}\right)<0$. Let $\mathcal{U}$ be a random variable with distribution~\eqref{undershoot}.  Then, the following identity holds
\begin{equation}\label{furthercons}
e^{-\mathcal{U}}I\stackrel{Law}{=}e^{\overline{\xi}_{\infty}}L,
\end{equation}
where the factors in both sides of the equality are assumed to be independent and $L$ is a random variable with law given by
$$\p\left(L\in dt\right)=\frac{1}{\e(I^{-1})}\frac{1}{t}\p(I\in dt),\qquad t>0.$$
\end{corollary}
Identity~\eqref{furthercons} has been observed in the case where $\xi$ is the negative of a subordinator by Bertoin and Caballero~\cite{bertoin-caballero}. In the particular case where $\xi$ is spectrally positive, that is, it has non-negative jumps, and hence $\mathcal{U}\equiv0,$ the above equality has been obtained by Bertoin and Yor~\cite{BY-moments}. In terms of the densities, \eqref{furthercons} becomes
\begin{equation}\label{eq:5}
\int_{[0,\infty)}\nu_{\widehat{H}}(dz)e^{z}k(xe^{z})=\frac{1}{x \e(\widehat{H}_{1})}\int_{[0,\infty)}V_{H}(dt)k(xe^{-t}).
\end{equation}
If in equation (\ref{eq:5}) we replace $x$ by $xe^{-w}$ and integrate with respect to $\widetilde{\nu}_{H}(dw)$, we get
\begin{equation*}
\begin{split}
\iint_{[0,\infty)^{2}}\widetilde{\nu}_{H}(dw)\nu_{\widehat{H}}(dz)xe^{z-w}k(xe^{z-w})&=\frac{1}{ \e(\widehat{H}_{1})}\int_{[0,\infty)}dt\,k(xe^{-t})\\
&=\frac{1}{\e(\widehat{H}_{1})}\int^{x}_{0}ds\,\frac{1}{s}k(s).
\end{split}
\end{equation*}
Then, using the explicit expression of the measures $\widetilde{v}_{H}$ and $v_{\widehat{H}},$ and the fact that $Q^{2}/2=b\widehat{b}$, we obtain the following corollary which is an extension of an integro-differential equation obtained by Carmona, Petit and Yor \cite{CPY} (see also \cite{BY, bertoin+lindner+maller, Pardo+rivero+Vanschaik, BLR, Kuznetsov-pardo-savov} for related equations).

\begin{corollary}\label{int-equation}
Assume $q=0$ and that $\xi$ has a finite mean: $\e(|\xi_{1}|)<\infty$  and $\e(\xi_{1})<0.$ We have the following identity for $x>0$
\begin{equation}
\begin{split}
&\int^{x}_{0}ds\,\frac{1}{s}k(s)\\
&=\frac{Q^{2}}{2}xk(x)+\widehat{b}\int^{\infty}_{0}dw\left(\overline{\Pi}_{H}(w,\infty)+\kappa(0,0)\right)xe^{-w}k(xe^{-w})\\
&+b\int^{\infty}_{0}dz\,\overline{\Pi}_{\widehat{H}}(z,\infty)xe^{z}k(xe^{z})\\
&+\int^{\infty}_{0}dz\int^{\infty}_{0}dw\left(\overline{\Pi}_{H}(w,\infty)+\kappa(0,0)\right)\overline{\Pi}_{\widehat{H}}(z,\infty)xe^{z-w}k(xe^{z-w}).
\end{split}
\end{equation}
\end{corollary}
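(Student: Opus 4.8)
The plan is to start from equation~(\ref{eq:5}), which is itself a density-form restatement of the identity in law in Corollary~\ref{cor:2}, and to iterate the same convolution trick once more, now on the left-hand side. Concretely, in~(\ref{eq:5}) I would replace $x$ by $xe^{-w}$ and integrate both sides against $\widetilde{\nu}_{H}(dw)$ over $[0,\infty)$. On the right-hand side this produces the double integral $\frac{1}{\e(\widehat H_1)}\iint V_H(dt)\widetilde\nu_H(dw)\,k(xe^{-t-w})$; since $\widetilde\nu_H$ and $V_H$ are mutually inverse in the Laplace domain (their convolution is Lebesgue measure on $(0,\infty)$, by~(\ref{nutildelaplace})), this collapses to $\frac{1}{\e(\widehat H_1)}\int_0^\infty dt\,k(xe^{-t})$, and the change of variables $s=xe^{-t}$ turns it into $\frac{1}{\e(\widehat H_1)}\int_0^x \frac{ds}{s}k(s)$, matching the left-hand side of the asserted formula.

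Next I would unfold the left-hand side. After the substitution and integration it reads $\iint_{[0,\infty)^2}\widetilde\nu_H(dw)\nu_{\widehat H}(dz)\,xe^{z-w}k(xe^{z-w})$. I would then plug in the explicit expressions $\widetilde\nu_H(dw)=b\,\delta_0(dw)+(\overline\Pi_H(w)+\kappa(q,0))\,dw$ from~(\ref{nutilde}) (with $q=0$, so $\kappa(0,0)$) and $\nu_{\widehat H}(dz)=\frac{1}{\e(\widehat H_1)}\widetilde\nu_{\widehat H}(dz)=\frac{1}{\e(\widehat H_1)}\big(\widehat b\,\delta_0(dz)+\overline\Pi_{\widehat H}(z)\,dz\big)$. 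Expanding the product of the two measures gives four terms according to which atom/absolutely-continuous part is selected: the $(\delta_0,\delta_0)$ term yields $b\widehat b\, xk(x)$, which equals $\frac{Q^2}{2}xk(x)$ by the identity $Q^2/2=b\widehat b$ recalled just before the statement; the $(\delta_0\ \text{in}\ \widetilde\nu_H,\ dz)$ term gives $b\int_0^\infty dz\,\overline\Pi_{\widehat H}(z)\,xe^z k(xe^z)$; the $(dw,\ \delta_0\ \text{in}\ \nu_{\widehat H})$ term gives $\widehat b\int_0^\infty dw\,(\overline\Pi_H(w)+\kappa(0,0))\,xe^{-w}k(xe^{-w})$; and the $(dw,dz)$ term gives the final double integral. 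Multiplying through by $\e(\widehat H_1)$ — which I carry on both sides from the start, or absorb into $\nu_{\widehat H}$ — produces exactly the four terms in the displayed identity.

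The only genuinely delicate point is the collapse of the right-hand double integral, i.e.\ justifying $\iint \widetilde\nu_H(dw)V_H(dt)\,g(t+w)=\int_0^\infty g(u)\,du$ for the relevant (non-negative, integrable against $ds/s$ near the origin) integrand $g(u)=k(xe^{-u})$; this is Fubini plus the fact that $\widetilde\nu_H * V_H = \mathrm{Leb}|_{(0,\infty)}$, which follows from matching Laplace transforms $\frac{\kappa(q,\lambda)}{\lambda}\cdot\frac{1}{\kappa(q,\lambda)}=\frac1\lambda$ as noted in the excerpt. One also needs that all four integrals on the left converge; this is inherited from the finiteness of $\e(\widehat H_1)$ (equivalently $\widehat b+\int_0^\infty \overline\Pi_{\widehat H}(x)\,dx<\infty$, guaranteed by~(\ref{prod-mean}) under $\e|\xi_1|<\infty$) together with $\int_{[0,\infty)}V_H(dy)<\infty$ since $H$ has finite lifetime, so no extra hypotheses beyond those in the statement are required. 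Modulo these routine measure-theoretic checks, the identity is a two-fold application of the same averaging device already used to pass from~(\ref{pot-eq}) to Corollary~\ref{cor:2} and then to~(\ref{eq:5}), and the proof is essentially a bookkeeping exercise.
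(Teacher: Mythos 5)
Your proposal is correct and follows exactly the paper's derivation: replace $x$ by $xe^{-w}$ in equation~(\ref{eq:5}), integrate against $\widetilde{\nu}_{H}(dw)$, collapse the right-hand side via $\widetilde{\nu}_{H}*V_{H}=\mathrm{Leb}|_{(0,\infty)}$, and expand the left-hand side into four terms using the explicit forms of $\widetilde{\nu}_{H}$ and $\nu_{\widehat{H}}$ together with $Q^{2}/2=b\widehat{b}$. The only difference is that you spell out the Fubini and convergence checks that the paper leaves implicit.
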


\section{Proofs}
\label{proofs}

In this section, we collect the proofs of all the results of the previous sections as presented in the article.

\begin{proof}[Proof of Lemma \ref{lemma:fR}]
Let $\sigma$ be a subordinator and $\widehat{X}$ be the $1$-self-similar Markov process associated to $\widehat{\sigma}=-\sigma$ via Lamperti's transformation, see \cite{lamperti, kyprianou-book}. That is, $\widehat{X}$ is a positive valued strong Markov process killed at its first hitting time of $0,$ $T_{0}:=\inf\{t>0: \widehat{X}_{t}=0\},$ and it has the scaling property: for $c>0,$ the law of the process $(c\widehat{X}_{t/c},\ t\geq 0)$ started  from $\widehat{X}_{0}=x$ equals that of $(\widehat{X}_{t}, t\geq 0)$ started from $\widehat{X}_{0}=cx.$ Furthermore, it can be represented as  
 $$\widehat{X}_{t}=x\exp\{\widehat{\sigma}_{\tau(t/x)}\}1_{\{\tau(t/x)<\infty\}},\qquad \tau(t)=\inf\left\{s>0: \int^{s}_{0}du\exp\{\widehat{\sigma}_{u}\}>t\right\},$$ where $\widehat{X}_{0}=x>0$ and with the usual convention that $\inf \emptyset=\infty.$
We will denote by $\widehat{\p}_{z}$ the law of $\widehat{X}$ started from $\widehat{X}_{0}=z>0.$ 
  In \cite{BY2001} it has been proved that the law of $R_{\sigma}$ is a quasi-stationary law for $\widehat{X}$, that is, the following identity of measures holds for any $t\geq 0$
\begin{equation*}\label{qs}
e^{-t}\p(R_{\sigma}\in dy)=\int_{[0,\infty)}\p(R_{\sigma}\in dz)\widehat{\p}_{z}(\widehat{X}_{t}\in dy, t<T_{0}),\qquad \text{on} \ [0,\infty).
\end{equation*} 
 Integrating the above equation in $t$ we get 
 \begin{equation}\label{qsint}
\frac{1}{y}\p(R_{\sigma}\in dy)=\int_{[0,\infty)}\p(R_{\sigma}\in dz)\int^{\infty}_{0}dt\widehat{\p}_{z}(\widehat{X}_{t}\in dy, t<T_{0})\frac{1}{y},
\end{equation} 
on $[0,\infty).$
The self-similarity and Lamperti's transformation give the following identities, for $f:\re^{+}\to\re^{+}$ positive and measurable, and any $z>0$,
\begin{equation*}
\begin{split}
\int^{\infty}_{0}dt\widehat{\e}_{z}\left(f(\widehat{X}_{t})\frac{1}{\widehat{X}_{t}}, t<T_{0}\right)
&=\int^{\infty}_{0}\frac{dt}{z}\widehat{\e}_{1}\left(f\left(z\widehat{X}_{t/z}\right)\frac{1}{\widehat{X}_{t/z}}, t<T_{0}\right)\\
&=\int^{\infty}_{0}ds\widehat{\e}_{1}\left(f\left(z\widehat{X}_{s}\right)\frac{1}{\widehat{X}_{s}}, t<T_{0}\right)\\
&={\e}\left(\int^{\infty}_{0}ds f\left(z\exp\{\widehat{\sigma}_{\tau(s)}\}\right)\exp\{-\widehat{\sigma}_{\tau(s)}\}1_{\{\tau(s)<\infty\}}\right)\\
&=\e\left(\int^{\infty}_{0}dv f\left(z\exp\{{-\sigma}_{v}\}\right)1_{\{v<\zeta\}}\right)\\
&=\int_{[0,\infty)}V_{\sigma}(dy)f(ze^{-y})
\end{split}
\end{equation*}
 where in the first equality we use the self-similarity, in the second the change of variables $s=tz^{-1},$ in the third we use Lamperti's transformation, and finally in the fourth we make the change of variables $v=\tau(s).$ The above identity and the equality of measures (\ref{qsint}) imply that
\begin{equation*}
\begin{split}
\int_{[0,\infty)}\frac{1}{y}\p(R_{\sigma}\in dy) f(y)&=\int_{[0,\infty)}\p(R_{\sigma}\in dz)\int^{\infty}_{0}dt\widehat{\e}_{z}\left(f(\widehat{X}_{t})\frac{1}{\widehat{X}_{t}}, t<T_{0}\right)\\
&=
\int_{[0,\infty)}\p(R_{\sigma}\in dz)\int_{[0,\infty)}V_{\sigma}(dy)f(ze^{-y}),
\end{split}
\end{equation*} 
for any $f:\re^{+}\to\re^{+}$ positive and measurable. This implies the claimed equality of measures. 
\end{proof}

\begin{proof}[Proof of Theorem \ref{keythm}]

We show that the left and right-hand side of the equation~\eqref{pot-eq} have the same Laplace transform.

On the one hand, we have the pathwise identity
\begin{equation}\label{Markovprop}
1-e^{-\lambda I}=\lambda\int^{\zeta}_{0}dt\,e^{\xi_{t}}\exp\left\{-\lambda\int^{\zeta}_{t}ds\,e^{\xi_{s}}\right\}
=\lambda\int^{\zeta}_{0}dt\,e^{\xi_{t}}\exp\left\{-\lambda e^{\xi_{t}}\widetilde{I}\right\},
\end{equation}
where
$$\widetilde{I}:=\int^{(\zeta-t)}_{0}ds\,e^{\xi_{t+s}-\xi_{t}}.$$ 
On the event $\{t<\zeta\}$, the random variable $\widetilde{I}$ has the same law as $I$ and it is independent of $(\xi_{u}, u\leq t)$, then, taking expectations on both sides of~\eqref{Markovprop}, we have
\begin{equation*}
\begin{split}
&\e\left(1-e^{-\lambda I}\right)\\
&=\lambda\int^{\infty}_{0}dt\e\left(e^{\xi_{t}}\exp\left\{-\lambda e^{\xi_{t}}\widetilde{I}\right\}1_{\{t<\zeta\}}\right),\quad \\
&=\lambda\int^{\infty}_{0}dt\e\left(e^{\xi_{t}}\e\left(\exp\left\{-\lambda e^{\xi_{t}}\widetilde{I}\right\}\big |\mathcal{F}_{t}\cap\{t<\zeta\} \right)\right)\\
&=\lambda\int^{\infty}_{0}dt\e\left(e^{\xi_{t}}\e\left(\exp\left\{-\lambda z\widetilde{I}\right\}\right)\Big|_{z=e^{\xi_{t}}}\right).
\end{split}
\end{equation*}
Using~\eqref{eq:potentialmeasure}, the last term in the expression above can be written as
\begin{equation}\label{1st}
\lambda\int_{\re}U(dy)e^{y}\e\left(\exp\left\{-\lambda e^{y}\widetilde{I}\right\}\right)=\lambda\int_{0}^{\infty}e^{-\lambda u}\int_{\re}U(dy)e^{y}\p(e^{y}I\in du).
\end{equation}

On the other hand, an application of Fubini's theorem implies that for any $\lambda\geq 0,$
\begin{equation}\label{2nd}
\e(1-e^{-\lambda I})=\lambda\int^{\infty}_{0}dt\,e^{-\lambda t}\p(I>t).
\end{equation}
Finally, by~\eqref{1st} and~\eqref{2nd}, we have the equality of measures
$$du\p(I>u)=\int_{\re}U(dy)e^{y}\p(e^{y}I\in du),\qquad \text{on}\,\, (0,\infty).$$
Since $I$ has a density, $k,$ a change of variables implies that 
$$\p(e^{y}I\in du)=e^{-y}k(ue^{-y})du,\qquad \text{on}\,\, (0,\infty).$$ The result follows.
\end{proof}

\begin{proof}[Proof of Corollary \ref{corollary...1}]
It follows by integrating in both sides of the integral equation~\eqref{pot-eq} with respect to $\beta t^{\beta-1}dt$ on $(0,\infty)$, with $\beta\in \mathbb{C}$, $\Re\beta>0$, and $\beta\neq 0$. The proof is indeed elementary but needs the remark that, since we are assuming that the L\'evy process $\xi$ either has a finite lifetime or drifts towards $-\infty$, we have that the measure $U$ is $\sigma$-finite and, for $\lambda\in C,$
$$\int_{\re}U(dy) e^{\lambda y}=\frac{1}{\Psi(-i\lambda)},\qquad \lambda\neq 0.$$ See e.g. the Chapter 1 in \cite{bertoin-book}.  
\end{proof}

\begin{proof} [Proof of Theorem~\ref{PPS2}]

This is a consequence of Lemma~\ref{lemma:fR} and equation~\eqref{supremum}. Indeed, assume that the subordinator $\sigma$ in Lemma~\ref{lemma:fR} has finite lifetime a.s., then $\phi(0)V_{\sigma}(dy)$ defines a probability measure and, if $G^{(0)}$ denotes a random variable with law given by $\phi(0)V_{\sigma}(dy)$ and $J_{\sigma}$ be a random variable with law 
$$\p(J_{\sigma}\in dy)={\phi(0)y}\p\left(\frac{1}{R_{\sigma}}\in dy\right),\qquad y>0,$$ 
then the equality of measure in Lemma~\ref{lemma:fR} can be interpreted as the identity in law
$$J_{\sigma}\stackrel{Law}{=}e^{G^{(0)}}\frac{1}{R_{\sigma}}.$$
To obtain the first identity in Theorem~\ref{PPS2}, we just need to set $\sigma=H$ and use equation~\eqref{supremum}, the second identity then follows from the first one.
\end{proof}

\begin{proof} [Proof of Proposition \ref{corollary....2}]
For the first part, using that the residual exponential functional $R_{H}$ has entire moments of any order, it is enough to justify that, for any $\delta\in(0,1),$ we have $\e\left(I^{-\delta}_{-\widehat{H}}\right)<\infty.$ But this is a straightforward consequence of the factorisation in Lemma~\ref{lemmaBY} and the fact that the exponential random variable has moments of order $-\delta$ for any $\delta\in(0,1)$. 

For the second part, the fact that if $q=0$ and $\e(\xi_{1})<\infty$ then $\e(I^{-1})<\infty$ was proved in Theorem 6 in \cite{BY}. Nevertheless, using the integral equation (\ref{pot-eq}) and the factorisation identity (\ref{fact-pps}), we give below an alternative proof of this fact and also prove the converse. For that end, we establish the result for the downward ladder height subordinator $\widehat{H}$. The general case will follow from the identity (\ref{fact-pps}), because the latter implies that $\e(I^{-1})=\kappa(q,0)\e\left(I^{-1}_{-\widehat{H}}\right),$ and we also have that $\kappa(q,0)>0$ if either $q>0$ or $q=0$ and $\xi$ drifts to $-\infty.$ 

Recall the measure $\widetilde{\nu}_{\widehat{H}}(dy)$ defined in (\ref{nutilde}) and that the convolution of $\widetilde{\nu}_{\widehat{H}}(dy)$ and $V_{\widehat{H}}$ equals the Lebesgue measure. We obtain, by integrating the integral equation (\ref{pot-eq}) with respect to $\widetilde{\nu}_{\widehat{H}}$, that for all $x\in \re,$
\begin{equation*}
\begin{split}
&\e\left(\frac{1}{I_{-\widehat{H}}}1_{\{I_{-\widehat{H}}>e^{x}\}}\right)=\int_{[0,\infty)}\widetilde{\nu}_{\widetilde{H}}(dy)\p(I_{-\widehat{H}}>e^{x+y})\\
&=b\p(I_{-\widehat{H}}>e^{x})+\int^{\infty}_{0}dy\left(\widehat{\kappa}(q,0)+\overline{\Pi}_{\widehat{H}}(y)\right)\p(I_{-\widehat{H}}>e^{x+y}).
\end{split}
\end{equation*}
Then, if $\e(I^{-1}_{-\widehat{H}})<\infty,$ we infer, taking $x\to-\infty$ and using Fatou's theorem, that necessarily $\int^{\infty}_{0}dy\left(\widehat{\kappa}(q,0)+\overline{\Pi}_{\widehat{H}}(y)\right)<\infty$ and hence $\widehat{\kappa}(q,0)=0$ and $\int^{\infty}_{0}dy\overline{\Pi}_{\widehat{H}}(y)<\infty$. The temporal Wiener-Hopf factorisation implies that $q=\kappa(q,0)\widehat{\kappa}(q,0)=0$. It also follows that $\e(\widehat{H}_{1})<\infty.$ Reciprocally, if $q=0$ and $\e(\widehat{H}_{1})<\infty,$ then we have $\widehat{\kappa}(0,0)=0$ and $\int^{\infty}_{0}dy\overline{\Pi}_{\widehat{H}}(y)<\infty.$ Using the monotone convergence theorem, we deduce from the above equation that   
$\e\left(I^{-1}_{-\widehat{H}}\right)=\e(\widehat{H}_{1}).$
\end{proof}

\begin{proof}[Proof of Theorem \ref{th:asym1}]

($ii$) implies ($i$). If ($ii$) is satisfied we have that $\e(e^{\beta \overline{\xi}_{\infty}})<\infty$ for any $\beta<\alpha,$ and by the spatial Wiener-Hopf factorisation, see Proposition 5.1 in \cite{KKM}, this implies that $$0<\e\left(e^{\beta \xi_{1}}1_{\{1<\zeta\}}\right)<1,\qquad \beta\in (0,\alpha).$$ By Lemma~3 in \cite{rivero-convequiv}, the latter implies that $\e(I^{\beta})<\infty$ for any $0<\beta<\alpha.$ The factorisation in Theorem~\ref{PPS2} implies that $\e\left({I}^{\beta}_{-\widehat{H}}\right)<\infty$ and, more importantly, $$\e\left(R^{-\beta-1}_{H}\right)<\infty,\qquad \beta<\alpha.$$ Because of this and the fact that ${I}_{-\widehat{H}}$ has entire moments of any order, we know that there is an $\epsilon>0$ such that $$\e\left(\left({I}_{-\widehat{H}} R^{-1}_{H}\right)^{\alpha+\epsilon}\right)<\infty.$$ By  Breiman's classical result (see Proposition 3 in \cite{breiman}) we obtain 
$$\p\left(e^{\overline{\xi}_{\infty}}{I}_{-\widehat{H}}R^{-1}_{H}>t\right)\sim \e\left(\left({I}_{-\widehat{H}} R^{-1}_{H}\right)^{\alpha}\right)\p\left(e^{\overline{\xi}_{\infty}}>t\right),\qquad t\to\infty.$$ We conclude the proof by recalling the factorisations in Theorem~\ref{PPS2}.

($i$) implies ($ii$). Let us assume that ($i$) holds. In view of the second factorisation in Theorem~\ref{PPS2} and according to Theorem~4.2 in \cite{jacobsenetal}, ($ii$) holds true as soon as $$\e\left(({I}_{-\widehat{H}}R^{-1}_{H})^{\alpha+i\lambda}\right)\neq 0,\qquad \lambda\in \re,$$ whenever $\e(({I}_{-\widehat{H}}R^{-1}_{H})^{\alpha+\delta})<\infty$ for some $\delta>0.$ To verify these facts, we will start by showing that $$\e\left(({I}_{-\widehat{H}}R^{-1}_{H})^{\alpha+\delta}\right)<\infty,$$ for some $\delta>0.$ Since ${I}_{-\widehat{H}}$ has entire moments of any order, it suffices to prove that $\e(R^{-\alpha-\delta}_{H})<\infty.$ From the regular variation of the tail distribution of $I$ it follows, as above, that $\e(R^{-\beta-1}_{H})<\infty,$ for all $\beta<\alpha.$ Next, in the papers~\cite{berg, AJR} it has been proved that the logarithm of the residual exponential functional of any subordinator is infinitely divisible, and so is $\log R_{H}.$ Moreover, the probability measure 
$$\p^{(\alpha)}(R_{H}\in dy):=\frac{y^{-\alpha}}{\e\left(R^{-\alpha}\right)}\p(R_{H}\in dy),$$ is an Esscher transform of the law of $\log R_{H}.$ According to Theorem~33.1 in \cite{sato}, under this probability measure, $\log R_{H}$ is still an infinitely divisible random variable and therefore its characteristic function is nonzero: $$\e^{(\alpha)}\left(\exp\{i\lambda \log R_{H}\}\right)=\e^{(\alpha)}\left(R_{H}^{i\lambda }\right)=\frac{\e(R^{i\lambda-\alpha})}{\e(R^{-\alpha})}\neq 0.$$ We are left to prove that $$\e\left({I}^{\alpha+i\lambda}_{-\widehat{H}}\right)\neq 0,\qquad \lambda\in \re.$$   By the factorisation in Lemma~\ref{lemmaBY} for the exponential random variable  we have the equality 
$$\Gamma(1+\beta+i\lambda)=\int^{\infty}_{0}dx\, x^{\beta+i\lambda}e^{-x}=\e\left({I}^{\beta+i\lambda}_{-\widehat{H}}\right)\e\left(R^{\beta+i\lambda}_{\widehat{H}}\right),\qquad \lambda\in\re,\,\,\beta >-1,$$ where $\Gamma(z)$ denotes the usual Gamma function at $z\in \mathbb{C},$ with $\Re z>0.$ Since the Gamma function has no zeros, our claim follows.
\end{proof}

\begin{proof}[Proof of Theorem~\ref{VRrtb}] 

For $q>0$ the results in ($ii$) and ($iii$) can be obtained adapting the results in Theorem 4.1 in  \cite{KKM}, which were proved under the assumption that $q=0,$ but their arguments allow to guarantee the validity of the results for $q\geq 0$ as follows. First, mimicking the arguments in Proposition 17 in Section VI in~\cite{bertoin-book}, it is not difficult to verify that the law of the supremum of $\xi$ on $(0,\zeta)$ is equal to the law of a subordinator with infinite lifetime $\mathcal{H}$ and Laplace exponent $\lambda\mapsto\kappa(q,\lambda)-\kappa(q,0),$ sampled at an independent exponential time of parameter $\kappa(q,0),$ that we will denote by $\ee_{\kappa(q,0)},$ viz.
$$\overline{\xi}_{\infty}=\sup_{0<s<\zeta}\xi_{s}\stackrel{Law}{=}\mathcal{H}_{\ee_{\kappa(q,0)}}.$$ Moreover, the upward ladder height $H$ has the same law as $\mathcal{H}$ killed at $\ee_{\kappa(q,0)}$. According to Exercise 7.5 in \cite{kyprianou-book}, the tail L\'evy measure of $\mathcal{H}$ is obtained from $\Pi$ by Vigon's identity and
$$\overline{\Pi}_{H}(x)=\overline{\Pi}_{\mathcal{H}}(x)={\Pi}_{\mathcal{H}}(x,\infty)=\int_{[0,\infty)}V_{\widehat{H}}(dy)\overline{\Pi}(x+y,\infty),\qquad \qquad x>0.$$ Since $\widehat{H}$ is also a killed subordinator, its potential measure $V_{\widehat{H}}(dy)$ is a renewal measure with finite total mass and Laplace transform
$$\int_{[0,\infty)}V_{\widehat{H}}(dy)e^{-\lambda y}=\frac{1}{\widehat{\kappa}(q,\lambda)},\qquad \lambda \geq 0.$$
The arguments in the proof of Proposition 5.3 in \cite{KKM} apply to show that when $\alpha>0,$ if $\overline{\Pi}^{+}$ has an exponential decrease with index $\alpha,$ then $\overline{\Pi}_{\widehat{H}}$ holds the same property and also
$$\overline{\Pi}^{+}(x)\sim \widehat{\kappa}(q,\alpha)\overline{\Pi}_{H}(x),\qquad x\to\infty.$$ We can now follow the proof of Theorem 4.1 in \cite{KKM}, both when $\alpha>0$ and $\alpha=0,$ to deduce that
$$\p\left(\mathcal{H}_{\ee_{\kappa(q,0)}}>x\right)\sim \frac{\kappa(q,0)}{(\kappa(q,-\alpha))^{2}}\overline{\Pi}_{\mathcal{H}}(x),\qquad x\to\infty.$$ 
When $\alpha>0,$ the rightmost term above behaves asymptotically, as $x\to\infty$, like $$\frac{\kappa(q,0)}{\kappa(q,-\alpha)}\frac{1}{\kappa(q,-\alpha)\widehat{\kappa}(q,\alpha)}\overline{\Pi}^{+}(x).$$
In this case, the Wiener-Hopf factorisation implies that $\kappa(q,-\alpha)\widehat{\kappa}(q,\alpha)=-\psi(\alpha),$ with $\psi(\alpha)$ as defined in the statement of the theorem. According to Theorem~\ref{th:asym1}, we have the estimate
\begin{equation*}
\begin{split}
\p(I>t)&\sim \e\left(I^{\alpha}_{-\widehat{H}}R^{-\alpha}_{H}\right)\frac{\kappa(q,0)}{(\kappa(q,-\alpha))^{2}}\overline{\Pi}_{H}(\log t)\\
&\sim \e\left(I^{\alpha}_{-\widehat{H}}R^{-\alpha}_{H}\right)\frac{\kappa(q,0)}{\kappa(q,-\alpha)}\frac{1}{-\psi(\alpha)}\overline{\Pi}^{+}(\log t).
\end{split}
\end{equation*}
 To finish the case $\alpha>0$ we just need to verify the equality of constants
$$\e(I^{\alpha})=\e\left(I^{\alpha}_{-\widehat{H}}R^{-\alpha}_{H}\right)\frac{\kappa(q,0)}{\kappa(q,-\alpha)}.$$ But this follows from the identity in law in Theorem~\ref{PPS} and Lemma~\ref{lemma:fR}. Now, when $\alpha=0,$ Theorem~\ref{th:asym1} ensures that
$$\p(I>t)\sim \frac{1}{\kappa(q,0)}\overline{\Pi}_{H}(\log t),\qquad t\to\infty.$$ Therefore, we are left to verify that
$$\frac{1}{\kappa(q,0)}\overline{\Pi}_{{H}}(x)\sim \overline{\Pi}^{+}(x),\qquad x\to\infty.$$ This is an immediate consequence of the dominated convergence theorem (as in the proof of Theorem 3 (b) in \cite{rivero-sinai}).
\end{proof}

\begin{proof}[Proof of Theorem \ref{asym:2}]
Analogously to the proof of Theorem~\ref{th:asym1}, the equivalence is a consequence of Theorem~4.2 in \cite{jacobsenetal}. Indeed, because of the identity in Theorem~\ref{PPS}, we have
$$\p(I^{-1}>t)=\p\left(({I}_{-\widehat{H}}J_{H})^{-1}>t\right),\qquad t>0.$$ 
Then, it is enough to verify that 
$$\e\left((J^{-1}_{H})^{\alpha+i\lambda}\right)=\kappa(q,0)\e\left(R^{\alpha-1+i\lambda}_{H}\right)\neq 0,\qquad \lambda \in \re,$$ whenever $\e((J^{-1}_{H})^{\alpha+\delta})<\infty$ for some $\delta>0.$ The former is immediate from the infinite divisibility of $\log R_{H}$. The latter follows from the fact that $R_{H}$ has entire moments of any order. Therefore, Theorem~4.2 in \cite{jacobsenetal} ensures that ($i$) and ($ii$) are equivalent. Moreover, since for any $\epsilon>0$ we have that 
$$\e\left((J^{-1}_{H})^{\alpha+\epsilon}\right)={\kappa(q,0)}\e\left(R^{\alpha+\epsilon-1}_{H}\right)<\infty,$$ 
Breiman's result (see Proposition 3 in \cite{breiman}) then implies the estimate $$\p(I\leq t)\sim {\kappa(q,0)}\e\left(R^{\alpha-1}_{H}\right)\p(I_{-\widehat{H}}\leq t),\qquad t\to 0+.$$ 
\end{proof}

\begin{proof}[Proof of Theorem \ref{corollarysmtg}]
The proof of this Theorem is based on the results of \cite{Pardo+rivero+Vanschaik}. When $q>0,$  both the upward and downward ladder height processes are killed subordinators. Then, Theorem~2.5 in \cite{Pardo+rivero+Vanschaik} implies that 
$$\p({I}_{-\widehat{H}}\leq t)\sim \widehat{\kappa}(q,0)t,\qquad t\to 0+.$$ By Theorem~\ref{asym:2} we have $$\p(I\leq t)\sim\kappa(q,0)\widehat{\kappa}(q,0)t,\qquad t\to 0+.$$ The temporal Wiener-Hopf factorisation $q=\kappa(q,0)\widehat{\kappa}(q,0)$ then gives the characterisation of the constant, which concludes the proof in this case. 

When $q=0,$ the process $\xi$ drifts to $-\infty$ and the results in Theorem 3 (b) in \cite{rivero-sinai}  (when $\alpha=0$) and Proposition 5.3 in \cite{KKM}  (when $\alpha>0$) allow us to ensure that the tail L\'evy measure of the downward ladder height subordinator $\overline{\Pi}_{\widehat{H}}$ has exponential decrease and 
$$\overline{\Pi}_{\widehat{H}}(x)\sim \frac{1}{\kappa(0,\alpha)}\overline{\Pi}^{-}(x), \qquad x\to \infty.$$  Moreover, Theorem~2.5 in \cite{Pardo+rivero+Vanschaik} implies that 
$$\p({I}_{-\widehat{H}}\leq t)\sim\frac{\e({I}_{-\widehat{H}}^{-\alpha})}{1+\alpha}t\overline{\Pi}_{\widehat{H}}(\log(1/t)),\qquad t\to 0+,$$ and $\e\left({I}^{-\alpha}_{-\widehat{H}}\right)<\infty.$ Putting the pieces together we get
$$\p(I\leq t)\sim\frac{1}{1+\alpha}\e\left({I}_{-\widehat{H}}^{-\alpha}\right)\kappa(q,0)\e\left(R^{\alpha}_{H}\right)\frac{1}{\kappa(0,\alpha)}t\overline{\Pi}^{-}(\log(1/t)),\qquad t\to 0+.$$ The recurrence for the moments of $R_{H}$ gives the identity $\e\left(R^{\alpha}_{H}\right)\frac{1}{\kappa(0,\alpha)}=\e\left(R^{\alpha-1}_{H}\right),$ which together with  the factorisation in (\ref{PPS}), allows to deduce that the constant in the above estimate satisfies the identity
$$\kappa(q,0)\e\left({I}_{-\widehat{H}}^{-\alpha}\right)\e\left(R^{\alpha}_{H}\right)\frac{1}{\kappa(0,\alpha)}=\e(I^{-\alpha}).$$ The above also ensures that $\e(I^{-\alpha})<\infty.$ 
\end{proof}

\begin{proof}[Proof of Theorem \ref{rateofconvergence}]
The main idea is to use Lemma~\ref{febrero} for the L\'evy process whose law is obtained via the Esscher transformation $\p^{\theta}|_{\mathcal{F}_{t}}=e^{\theta\xi_{t}}\p|_{\mathcal{F}_{t}},$ for $t\geq 0.$ For that end, let us verify that, under $\p^{\theta}$, the conditions of Lemma~\ref{febrero} are satisfied. Under $\p^{\theta}$ we have that $$\e^{\theta}(\xi^{m+1}_{1})=\e(\xi^{m+1}_{1}e^{\theta\xi_{1}})<\infty.$$ 
According to Theorem 6.2.3 in \cite{thesevigon} the latter implies that, under $\p^{\theta}$, the upward ladder height has finite moment of order $m$, $\e^{\theta}(H^{m}_{t})<\infty,$ for all $t\geq 0.$ Moreover, the absolute continuity of the $1$-resolvent of $\xi$ under $\p$ with respect to Lebesgue measure, together with the local absolute continuity relation between $\p^{\theta}$ and $\p,$ implies that under $\p^{\theta}$ the $1$-resolvent of $\xi$ is also absolutely continuous. The assumption of $\xi$ being regular upwards under $\p$ implies that the same property holds under $\p^{\theta}$, since this is a local property of $\xi$ and the measures $\p^{\theta}$ and $\p$ are locally equivalent. The regularity upwards implies that the potential measure of the upward ladder height process $H$ under $\p^{\theta}$ has no mass at zero, $V^{\theta}_{H}(\{0\})= 0.$ Theorem 3 in \cite{MR3098676} implies that the law of $H_{\ee_{1}}$ under $\p^{\theta},$ with $\ee_{1}$ being an independent standard exponential random variable, has a density with respect to Lebesgue measure. 

Now, the integral equation in Theorem~\ref{keythm} allows to rewrite the difference
$$\left|t^{\theta}\p(t\delta<I\leq t\lambda)-C_{\theta}^{-1}\int_{\delta}^{\lambda}\frac{\theta dx}{x^{1+\theta}}\right|,$$
in terms of the renewal measure of $\xi$ under $\p^{\theta},$ 
$U_{\theta}(dy)=e^{\theta y}U(dy),$ and the integrable function $k_{\theta}(v):=v^{\theta}(k(\delta v)-k(\lambda v))$, as follows
\begin{align}\label{enero}
\left|\int_{\re}U(dy)t^{\theta}(k(t\delta e^{-y})-k(t\lambda e^{-y}))-\frac{1}{\mu_{\theta}}\int_{(0,\infty)}\frac{dv}{v}v^{\theta}(k(\delta v)-k(\lambda v))\right|\notag\\
=\left|\int_{\re}U_{\theta}(dy)k_{\theta}(te^{-y})-\frac{1}{\mu_{\theta}}\int_{\re}dyk_{\theta}(te^{-y})\right|,
\end{align}
where $\mu_{\theta}=\e^{\theta}(\xi_{1}).$
By hypotheses, under the transformed probability measure $\p^{\theta}$, $\xi$ is a L\'evy process that drifts to $+\infty$, satisfies $0<\e^{\theta}(\xi_{1})<\infty$, and it has renewal measure $U_{\theta}$. Using Lemma~\ref{febrero}, there is a bounded function $q$ such that (\ref{enero}) takes the form
\begin{align}\label{febrero++}
\left|\int_{-\infty}^{\infty}\frac{dy}{\e^{\theta}(H_{1})}k_{\theta}(te^{-y})\left(\int_{0}^{\infty}V_{\widehat{H}}^{\theta}(dz)q(y+z)\right)\right|, 
\end{align}
where $V_{\widehat{H}}^{\theta}$ is the potential measure of the downward ladder height process $\widehat{H}$ under $\p^{\theta}.$
Now, let $$\ell(y):=\int_{0}^{\infty}V_{\widehat{H}}^{\theta}(dz)q(y+z),\qquad y\in \re,$$
and, making the change of variables $u=te^{-y}$ in the first term of (\ref{febrero++}), we get
\begin{align*}
\int_{-\infty}^{\infty}\frac{dy}{\e^{\theta}(H_{1})}k_{\theta}(te^{-y})\ell(y)=\frac{1}{\e^{\theta}(H_{1})}\int_{0}^{\infty}\frac{du}{u}k_{\theta}(u)\ell(\log(t/u)).
\end{align*} Notice that $\ell$ is a bounded function because $q$ is bounded and $V_{\widehat{H}}^{\theta}$ is a finite measure since, under $\p^{\theta}$, $\xi$ drift towards $\infty.$
To deal with the above integral, let $\chi\in(0,1)$ fixed, and split the interval $(0,\infty)$ into the sets $(0,t^{\chi})$ and $[{t^{\chi}},\infty)$. The integral over $[{t^{\chi}},\infty)$ can be bounded by above by
\begin{align*}
\sup_{y<(1-\chi)\log t}|\ell(y)|\int_{{t^{\chi}}}^{\infty}\frac{du}{u}k_{\theta}(u),
\end{align*}
and, using integration by parts and Karamata's theorem, we get
\begin{align*}
\int_{{t^{\chi}}}^{\infty}\frac{du}{u}k_{\theta}(u)&=\frac{1}{\delta^{\theta}}\int_{\delta{t^{\chi}}}^{\infty}dvv^{\theta-1}k(v)-\frac{1}{\lambda^{\theta}}\int_{\lambda{t^{\chi}}}^{\infty}dvv^{\theta-1}k(v)\\
&=\frac{1}{\delta^{\theta}}(\delta{t^{\chi}})^{\theta-1}\p(I>\delta{t^{\chi}})+\frac{\theta-1}{\delta^{\theta}}\int_{\delta{t^{\chi}}}^{\infty}duu^{\theta-2}\p(I>u)\\
&-\frac{1}{\lambda^{\theta}}(\lambda{t^{\chi}})^{\theta-1}\p(I>\lambda{t^{\chi}})-\frac{\theta-1}{\lambda^{\theta}}\int_{\lambda{t^{\chi}}}^{\infty}duu^{\theta-2}\p(I>u)\\
&\sim \left(\frac{1}{\delta^{\theta+1}}-\frac{1}{\lambda^{\theta+1}}\right)t^{-\chi}t^{\theta\chi}\p(I>t^{\chi}).
\end{align*}

The above estimate is uniform both in $\delta$ and $\lambda$, on each interval of the form $[a,\infty)$, $0<a<\infty$, this is because the functions $t\to t^{\theta-1}\p(I>t)$ and $\int_{t}^{\infty}duu^{\theta-2}\p(I>u)$, $t>0$, are regularly varying at infinity with index $-1$, and hence Theorem 1.5.2 in \cite{BGT} applies. Therefore
\begin{align}\label{marzo}
&\sup_{y<(1-\chi)\log t}|\ell(y)|\int_{{t^{\chi}}}^{\infty}\frac{du}{u}k_{\theta}(u)\\
&\sim\theta\sup_{y<(1-\chi)\log t}|\ell(y)|\left(\frac{1}{\delta^{\theta+1}}-\frac{1}{\lambda^{\theta+1}}\right)\frac{1}{t^{\chi}}t^{\theta\chi}\p(I>t^{\chi}),
\end{align}
which has order $t^{-\chi}$ uniformly in $a\leq  \delta\leq \lambda\leq \infty.$ 

Consider now the term of the integral over $(0,{t^{\chi}})$. Since $\ell(y)=o(y^{-(m-1)})$ as $y\to\infty$, and, for $0<u<{t^{\chi}}$ we have $\log(t/u)>(1-\chi)\log t$, taking $\epsilon>0$ and $t$ large enough we obtain
$$\left|\ell(\log(t/u))\right|\leq\epsilon(\log(t/u))^{-(m-1)}.$$
As a result, the integral over $(0,{t^{\chi}})$ can be bounded by above by
$$\int_{0}^{{t^{\chi}}}\frac{du}{u}k_{\theta}(u)(\log(t/u))^{-(m-1)}\leq \epsilon(1-\chi)^{-(m-1)}(\log t)^{-(m-1)}\int_{0}^{{t^{\chi}}}\frac{du}{u}k_{\theta}(u).$$ Since we also have the upper bound 
$$0\leq \int_{0}^{{t^{\chi}}}\frac{du}{u}k_{\theta}(u)\leq \left(\frac{1}{\delta^{\theta}}-\frac{1}{\lambda^{\theta}}\right)\e(I^{\theta-1})\leq 2a^{-\theta}\e(I^{\theta-1}),$$ we conclude that
$$\frac{1}{\e^{\theta}(H_{1})}\int_{0}^{t^{\chi}}\frac{du}{u}k_{\theta}(u)\ell(\log(t/u))=o((\log t)^{-(m-1)}),$$
uniformly in $0<a\leq \delta\leq \lambda\leq \infty.$

If we have furthermore
$$\e(e^{(\theta+\rho)\xi_{1}})<\infty,\quad\text{for some}\quad\rho>0,$$ the classical Stone's decomposition theorem~\cite{stone} ensures that $\ell(y)=o(e^{-\gamma y})$ as $y\to\infty$, for some $0<\gamma<\rho$. We can assume $\gamma<\min\{1, \rho\}$ so that we also have $\e(I^{\theta+\gamma-1})<\infty$.

Going back to the estimate of the integral on $(0,{t^{\chi}}),$ we see that, in this case, for $\epsilon>0$ and $t$ large enough, the corresponding integral is bounded by above by
$$\epsilon\int_{0}^{{t^{\chi}}}\frac{du}{u}k_{\theta}(u)\left(t/u\right)^{-\gamma}.$$ The finiteness of $\e(I^{\theta+\gamma-1})$ then allows us to write
\begin{align}\label{mayo}
\int_{0}^{{t^{\chi}}}\frac{du}{u}k_{\theta}(u)\left(t/u\right)^{-\gamma}&\sim t^{-\gamma}\int_{0}^{\infty}\frac{du}{u^{1-\gamma}}k_{\theta}(u)\notag\\
&=t^{-\gamma}\left(\frac{1}{\delta^{\theta+\gamma}}-\frac{1}{\lambda^{\theta+\gamma}}\right)\e(I^{\theta+\gamma-1}).
\end{align} 
Using (\ref{marzo}) and (\ref{mayo}), we finally obtain an order $o(t^{-\gamma^{\prime}})$, with $\gamma^{\prime}=\min\{\chi, \gamma\}$, which gives the claimed result.
\end{proof}

\begin{proof}[Proof of Corollary \ref{corollary...3}]
The statements of the corollary follow from Theorem~\ref{rateofconvergence} using the inequality
\begin{align*}
\sup_{x>0}\left|\overline{F}_{t}(x)-\frac{1}{(1+x/t)^{\theta}}\right|
&=\sup_{x>0}\left|\overline{F}_{t}(xt)-\frac{1}{(1+x)^{\theta}}\right|\\
&\leq \frac{1}{C_{\theta}t^{\theta}\p(I>t)}\sup_{x>0}\left|C_{\theta}t^{\theta}\p(I>t(1+x))-\frac{1}{(1+x)^{\theta}}\right|\\
&+ \frac{1}{C_{\theta}t^{\theta}\p(I>t)}\left|1-C_{\theta}t^{\theta}\p(I>t)\right|.
\end{align*}
\end{proof}

\appendix

\section{Stone's decomposition theorem}
\label{App:Stone}

The next lemma is a version of Stone's decomposition theorem. We follow the notation of Section~\ref{Notation}.

\begin{lemma}\label{febrero}
Assume $\xi=(\xi_{t}, t\geq 0)$ is real valued L\'evy process with infinite lifetime, finite mean $\e(|\xi_{1}|)<\infty,$ and such that $0<\mu=\e(\xi_{1})<\infty.$ Assume furthermore that the $1$-resolvent of the upward ladder height process $H$, 
$$\int^{\infty}_{0}dte^{-t}\p(H_{t}\in dy),$$ has a density w.r.t. Lebesgue measure on $(0,\infty)$ and that there is an $m\geq 2$ such that $\e(H^{m}_{1})<\infty.$ In this case, there is a bounded function $q$ such that 
$$\frac{dy}{\mu}-U(dy)=\frac{dy}{\e(H_{1})}\left(\int^{\infty}_{0}V_{\widehat{H}}(dz)q(y+z)\right)-V_{\widehat{H}}(-dy)V_{H}\{0\}1_{\{y<0\}},\qquad y\in\re,$$ and 
$$|q(y)|=o(y^{-(m-1)}),\qquad y\to\infty.$$
\end{lemma}

\begin{proof}
The hypothesis of $H$ having a $1$-resolvent that is absolutely continuous with respect to Lebesgue measure is equivalent to the absolute continuity of the law of $H_{\ee_{1}},$ with $\ee_{1}$ a standard exponential random variable independent of $H$. This implies, in particular, that $H_{\ee_{1}}$ has a spread-out distribution. The identity (\ref{renewalU}) in Section~\ref{Notation} implies that the renewal measure of $H$ has a density w.r.t. Lebesgue measure on $(0,\infty)$ that we will denote by $v_{H}$. By hypothesis $\e(H^{m}_{1})<\infty$, then $\e(H^{m}_{t})<\infty$ for all $t\geq 0$ and, since the mapping $t\mapsto \e(H^{m}_{t})$ is a polynomial of order $m$, it follows that $\e\left(H^{m}_{\ee_{1}}\right)<\infty.$  According to the classical Stone's decomposition theorem~\cite{stone}, the density $v_{H}$ is continuous and bounded and can be written as 
$$v_{H}(y)=\frac{1}{\e(H_{1})}+p(y),\qquad y>0,$$ and $p(y)=o(y^{-(m-1)})$ as $y\to\infty.$ Using this, the  Wiener-Hopf factorisation for the potential (\ref{WHpot}) takes the form
$$U(dy)=V_{H}\{0\}V_{\widehat{H}}(-dy)1_{\{y<0\}}+dy\int^{\infty}_{0}V_{\widehat{H}}(dz)v_{H}(y+z)1_{\{y+z> 0\}},\qquad y\in\re.$$ %
Moreover, using the equality in (\ref{prod-mean}) and the fact that $\int^{\infty}_{0}V_{\widehat{H}}(dz)=\frac{1}{\widehat{\kappa}(q,0)},$ with $\widehat{\kappa}(q,0)$ being the killing rate of the downward ladder height process, we obtain the equality
$$\frac{dy}{\mu}=\frac{dy}{\e(H_{1})}\int^{\infty}_{0}V_{\widehat{H}}(dz),\qquad y\in \re.$$
Therefore, for any compactly supported and measurable function $f$ we have the equalities
\begin{equation*}
\begin{split}
&\int_{\re}\frac{dy}{\mu}f(y)-\int_{\re}U(dy)f(y)\\
&=\int^{\infty}_{0}V_{\widehat{H}}(dz)\left(\int_{\re}dyf(y)\left(\frac{1}{\e(H_{1})}-v_{H}(y+z)1_{\{y+z> 0\}}\right)\right)\\
&\quad-\int^{\infty}_{0}V_{\widehat{H}}(dz)V_{H}\{0\}f(-z) \\
&=\int_{\re}dy f(y)\int^{\infty}_{0}V_{\widehat{H}}(dz)\left(\frac{1}{\e(H_{1})}1_{\{y+z<0\}}-p(y+z)1_{\{y+z> 0\}}\right)\\
&\qquad-\int^{\infty}_{0}V_{\widehat{H}}(dz)V_{H}\{0\}f(-z).
\end{split}
\end{equation*}The result follows by taking 
$$q(z)=\frac{1}{\e(H_{1})}1_{\{z<0\}}-p(z)1_{\{z> 0\}},\qquad z\in\re.$$
\end{proof}

\end{document}